\numberwithin{equation}{section}
\newtheorem{theorem}{Theorem}[section]
\newtheorem{proposition}[theorem]{Proposition}
\newtheorem{corollary}[theorem]{Corollary}
\newtheorem{lemma}[theorem]{Lemma}
\newtheorem{remark}[theorem]{Remark}
\numberwithin{equation}{section}
\begin{document}
\title[Forward, vanishing, and reverse Bergman Carleson measures]{Characterization of forward, vanishing, and reverse Bergman Carleson measures using sparse domination}
\author{Hamzeh Keshavarzi }
\maketitle

\begin{abstract}
In this paper, using a new technique from harmonic analysis called sparse domination, we characterize the positive Borel measures including forward, vanishing, and reverse Bergman Carleson measures.
The main novelty of this paper is determining the reverse Bergman Carleson measures which have remained open from the work of Luecking [Am. J.
Math. 107  (1985) 85–111]. Moreover, in the case of forward and vanishing measures, our results extend the results of [J. Funct. Anal. 280 (2021), no. 6, 108897, 26 pp] from $1\leq p\leq q< 2p$ to all $0<p\leq q<\infty$. In a more general case, we characterize the positive Borel measures $\mu$ on $\mathbb{B}$ so that the radial differentiation operator $R^{k}:A_\omega^p(\mathbb{B})\rightarrow L^q(\mathbb{B},\mu)$ is bounded and compact.
Although we consider the
weighted Bergman spaces induced by two-side doubling weights, the results are new even on classical weighted Bergman spaces.
\\
\textbf{MSC (2010):} 30H20; 47B39; 42B99.\\
\textbf{Keywords:} Bergman Carleson  measures, Sparse domination, Radial difference operators.
\end{abstract}

\section{Introduction}

Throughout the paper, $n$ is a fixed positive integer.
 Let $\mathbb{C}$ be the complex plane, $\mathbb{B}$ be the unit ball in $\mathbb{C}^n$, and $\mathbb{S}$ be the boundary of $\mathbb{B}$.
Let $H(\mathbb{B})$ be  the space of all analytic functions on $\mathbb{B}$.
A radial weight on $\mathbb{B}$ is a positive and integrable function $\omega$ on $\mathbb{B}$  such that $\omega(z)=\omega(|z|)$. If $0<p<\infty$, the weighted Bergman space is denoted by $A^p_\omega(\mathbb{B})$ and defined as:
 $$A^p_\omega= A^p_\omega(\mathbb{B})=\{ f\in H(\mathbb{B}): \ \|f\|_{\omega,p} =\Big{[} \int_\mathbb{B} |f(z)|^p \omega(z)dV(z)\Big{]}^{1/p}<\infty   \},$$
where  $V$ is the normalized Lebesgue measure on $\mathbb{B}$. Throughout the paper, we use the notations $dV_\omega=\omega dV$ and $\|.\|_\omega=\|.\|_{\omega,2}$.
For $p\geq 1$, $A^p_\omega$ is a Banach space and for $0<p<1$, it is a complete metric space. For every $\alpha>-1$, the classical weighted Bergman space, denoted by $A^p_{\alpha}$, is the space $A^p_{(1-|z|^2)^\alpha}$. For more details about of  Bergman spaces see \cite{pelaez1, zhu1}.

Let $\widehat{\mathcal{D}}$ be the class of all radial weights $\omega$ on $\mathbb{B}$ where $\widehat{\omega}(z)=\int_{|z|}^1 \omega(s)ds$ satisfies the doubling condition
$ \widehat{\omega}(r)\lesssim \widehat{\omega}(\frac{1+r}{2})$.  For some examples and details of these weights see \cite{pelaez1, pelaez5}.
We say that the weight $\omega\in \widehat{\mathcal{D}}$ is regular, denoted by $\omega\in \mathcal{R}$, if
 \begin{equation*}
\dfrac{\widehat{\omega}(r)}{(1-r) \omega(r)}\simeq 1, \ \  0\leq r<1,
 \end{equation*}
 and it is called rapidly increasing, denoted by $\omega\in \mathcal{I}$, if
  \begin{equation*}
\lim_{r\rightarrow 1} \dfrac{\widehat{\omega}(r)}{(1-r)\omega(r)}=\infty.
 \end{equation*}

   It is well known that $\mathcal{R}\bigcup\mathcal{I}\subseteq\widehat{\mathcal{D}}$. J. A. Peláez, J. Rättyä \cite{pelaez1} introduced the classes $\mathcal{R}$ and $\mathcal{I}$ and studied the weighted Bergman spaces $A_\omega^q(\mathbb{D})$ induced by these weights.  Some of the results of \cite{pelaez1} have been extended from $\omega\in\mathcal{R}\bigcup\mathcal{I}$ to $\omega\in\widehat{\mathcal{D}}$ in \cite{pelaez5}. Du et al. \cite{du1} studied these classes of weighted Bergman spaces on $\mathbb{B}$.

  Consider $\widecheck{\mathcal{D}}$ as the set of all radial weights $\omega$ on $\mathbb{B}$ so that there exist $K=K(\omega)>1$ and $C=C(\omega)>1$ such that
$$\widehat{\omega}(r)\geq C \widehat{\omega}(1-\frac{1-r}{K}), \ \ 0\leq r<1.$$
We know that $\mathcal{R}\subset \widehat{\mathcal{D}}\bigcap \widecheck{\mathcal{D}}$. In \cite[Proposition 6]{pelaez4}, it has been shown that the radial Bekoll\'{e}-Bonami weights are in $\widehat{\mathcal{D}}\bigcap\widecheck{\mathcal{D}}$.
 For a definition of Bekoll\'{e}-Bonami weights see \cite[Page 8]{pelaez1}. Bekoll\'{e} and Bonami introduced these weights in \cite{bekolle1,bekolle2}.

Let us explain the sparse domination technique. In recent decades, several studies have been conducted in the
field of weighted inequalities related to the precise obtaining of the
optimal bounds (in terms of the $A_p$ constant of the weights) of the weighted operator norm of Calder\'{o}n–Zygmund operators. A lot of work was done to reach the full proof of the $A_2$ theorem by T. Hyt\"{o}nen \cite{hytonen} from the proof of the linear dependence on the $A_2$ constant of $\omega$ of the $L^2(\omega)$
norm of the Ahlfors-Beurling transform \cite{petermichl}. For a survey of the advances on the topic, see \cite{hytonen, lerner} and the references therein.

After the work of Hyt\"{o}nen, A. Lerner \cite{lerner}, with an alternative proof of the $A_2$
theorem, showed that the norm of the Calder\'{o}n–Zygmund operators can be controlled
 from above by a very special dyadic type of operators called sparse operators.
Let $\mathcal{S}$ be a collection of dyadic cubes in a dyadic grid $\mathcal{D}$ (see Sect. 2 for the definition). The sparse operator $\mathcal{A_S}$ is defined as:
$$ \mathcal{A_S} f(z)=\sum_{Q\in \mathcal{S}} \langle f\rangle_Q \mathbf{1}_Q (z), $$
where $\mathbf{1}_Q$ is the characteristic function of the cube $Q$ and collection $\mathcal{S}$ satisfies the condition that there exists some $\gamma \in (0, 1)$ such that
$$\sum_{S^\prime \in ch_\mathcal{S}(S)} |S^\prime|\leq \gamma |S|,$$
for every $S\in \mathcal{ S}$. Here, $ch_\mathcal{S}(S)$ denotes the set of the children of a dyadic
cube $S$ in $\mathcal{S}$. In section 3, we show that the modulus of the $k$-th radial derivative of functions in $H(\mathbb{B})$ will be controlled by sparse operators from above.

Suppose that $\mu$ is a positive Borel measure on the domain $D\subset \mathbb{C}^n$. Let $A$ be a Banach space of holomorphic functions on $D$ where $A$
is contained in $L^q(D, \mu)$ for some $q> 0$. Then, $\mu$  is called a $(q,A)$-Carleson
measure if the inclusion map $i: A\rightarrow L^q(D,\mu)$ is bounded, that is, there exists a constant $C > 0$ such that
$$\int_D |f|^q d\mu <C \|f\|^q_A, \ \ f\in A.$$
A $(q,A)$-Carleson
measure is called a vanishing $(q,A)$-Carleson
measure if $i: A\rightarrow L^q(\mu)$ is compact and it is called a reverse $(q,A)$-Carleson
measure if there exists a constant $D> 0$ such that
$$\int_D |f|^q d\mu >D \|f\|^q_A, \ \ f\in A.$$
If $A=A_\omega^p$, then a $(q,A_\omega^p)$-Carleson
measure is called a $(p,q,\omega)$-Bergman Carleson measure and if $p=q$, it is called a $\omega$-Bergman Carleson measure.

Carleson \cite{carleson2} introduced the Carleson measures to study the structure of the Hardy spaces and solved the corona problem on the disc.
He proved that a finite positive
Borel measure $\mu$ is a $(p,H^p(\mathbb{D}))$-Carleson measure if and only if there exists a constant $C > 0$
such that $\mu (S(z)) \leq C(1-|z|)$ for all $z\in \mathbb{D}$. The Carleson square $S(z)$ will be defined in section 2.

Hastings \cite{hastings} (see also Oleinik \cite{oleinik} and Oleinik and Pavlov \cite{oleinik2}) proved a similar
characterization for the $(p,A^p(\mathbb{D}))$-Carleson measures: a finite positive Borel
measure $\mu$ is a $(p,A^p(\mathbb{D}))$-Carleson measure if and only if there exists a constant $C > 0$ such that  $\mu (S(z)) \leq C(1-|z|)^2$ for all $z\in \mathbb{D}$.

The sets $S(z)$ are clearly not invariant under the automorphisms of the disc. The first characterization for the Carleson measures of the Bergman spaces of the unit ball
$\mathbb{B}$ was given by Cima and Wogen \cite{cima2}. The sets that they used were again not invariant under automorphisms. Luecking \cite{luecking1} (see also \cite[Theorem 14, p. 62]{duren} and \cite{duren2}) proved that it is possible to give
a characterization for the Carleson measures of Bergman spaces of $\mathbb{B}$ by using the balls for
the Bergman (or Kobayashi, or pseudohyperbolic) distance which are invariant under automorphisms: a finite positive Borel measure $\mu$
is a $(p,A^p(\mathbb{B}))$-Carleson measure if and only if, for some (and hence all) $0 < r < 1$, there
is a constant $C_r > 0$ such that
$$\mu(B(z_0, r))\leq C_r V(B(z_0, r)),$$
for all $z_0 \in \mathbb{B}$, and the Bergman
metric ball $B(z_0, r)\subset \mathbb{B}$.

 If $\omega$ is a regular weight, then there are $-1<\alpha\leq\beta<\infty$ such that $A^p_\alpha\subseteq A^p_\omega\subseteq A^p_\beta$. Moreover,  the weights $(1-r)^\alpha$ are regular for $\alpha>-1$.
If $\omega\in \mathcal{I}$, we have the inclusion $H^p\subset A^p_\omega\subset A^p_\beta$ for all $\beta >-1$. In \cite{pelaez1}, it has been shown that if $\omega$ is regular then, as in the classical weighted Bergman spaces, we can characterize the Carleson measures of $A_\omega^q$ by Bergman metric balls. However, if $\omega\in \mathcal{I}$, we must determine these measures by the Carleson squares as in the Hardy spaces. The reason is that for $\omega\in \mathcal{R}$ and $r>0$, we have $V_\omega(S(z))\simeq V_\omega(B_n(z,r))$. However,  this equivalence does not hold for $\omega\in \mathcal{I}$.

In some recent papers, the sparse bounds of operators have entered the spaces of complex holomorphic functions. To the best knowledge of the author, the first work in this regard belonged to Aleman, Pott, and Reguera \cite{aleman} who proved a pointwise sparse domination estimate of the Bergman projection and studied the Sarason conjecture on the Bergman spaces. Later, Rahm, Tchoundja, and Wick \cite{wick1} gave some weighted estimates for the Berezin transforms and Bergman projections acting on weighted Bergman spaces.

 B. Hu et al. \cite{wick2}, using sparse domination, gave a new characterization for the boundedness and compactness of weighted composition operators from $A_\alpha^p$ to $A^q_\alpha$ for $\alpha>-1$, $1\leq p\leq q<2p$  on the upper half-plane and the unit ball. In \cite{hu}, these  results were extended to strictly pseudoconvex bounded domains. Checking the validity of the results of these two papers for the classical $(p,q, (1-|.|)^\alpha)$-Bergman Carleson measures is easy. In sections 4 and 5, these results will be extended to all $0< p\leq q<\infty$.  Moreover, in a more general case, we determine the positive Borel measures $\mu$ on $\mathbb{B}$ so that the radial differential operator $R^{(k)}:A_\omega^q\rightarrow L^q(\mu)$ is bounded and compact for $\omega\in \widehat{\mathcal{D}}$ and $0<p\leq q<\infty$.  J. A. Peláez, J. Rättyä \cite{pelaez2} characterized the boundedness of the differentiation operator $D^{(k)}:A_\omega^p(\mathbb{D})\rightarrow L^q(\mu)$. In a one-variable case, there is not much difference between operators $D^{(k)}$ and $R^{(k)}$.

The reverse $\omega$-Bergman Carleson measures have been characterized only in the case $d\mu=\mathbf{1}_G dV_\alpha$ on $A_\alpha^p$, where $G$ is a Borel subset of $\mathbb{B}$.  In \cite{luecking3} for $\mathbb{D}$ and in \cite{luecking4} for $\mathbb{B}$, Luecking showed that if $d\mu=\mathbf{1}_G dV_\alpha$, then $\mu$ is a reverse Bergman Carleson measure for $A_\alpha^p$ if and only if there are $r,\delta>0$ such that
$$\dfrac{V(G\bigcap B(z,r))}{V(B(z,r))}>\delta, \qquad z\in \mathbb{B}.$$
In \cite[Theorem 9]{korhonen}, the necessity of this result has been extended to doubling weights on $\mathbb{D}$: If $\omega\in \widehat{\mathcal{D}}$, $G$ is a Borel subset of $\mathbb{D}$, and $d\mu=\mathbf{1}_G dV_\omega$ is a reverse $\omega$-Bergman Carleson measure, then
$$\inf_{z \in \mathbb{D}} \dfrac{V_\omega(G\bigcap S(z))}{V_\omega(S(z))}>0.$$

Moreover, in \cite{luecking5}, Luecking has conducted another valuable study on the general reverse $(q,A_\alpha^q)$-Carleson measures. He gave a sufficient condition in \cite[Theorem 4.2]{luecking5} and a necessary condition in \cite[Theorem 4.3]{luecking5} for this class of measures.  \cite[Theorem 4.2]{luecking5} has been extended to the weighted Bergman spaces induced by doubling weights in \cite[Theorem 10]{korhonen}. In section 5, using the sparse domination technique, we give a complete characterization for the reverse $\omega$-Bergman Carleson measures when $\omega\in \mathcal{D}$. Furthermore, we present new versions of \cite[Theorems 4.2 and 4.3]{luecking5}.

Throughout the paper, we denote by $s^\prime$  the
conjugate of $s>1$, that is, $\frac{1}{s}+\frac{1}{s^\prime}=1$. If $\{A_i\}$ and $\{B_i\}$ are two collections of positive functions (sets), then $A_i\lesssim B_i$  means that there exists some positive constant $C$ such that $A_i\leq  CB_i$ ($A_i\subseteq  CB_i$) for
each $i$. In this case, we say that $A_i$ is, up to a constant, less than or equal to $B_i$. Moreover,
 we say that $\{A_i\}$ and $\{B_i\}$ are comparable if
$A_i\simeq B_i$, that is, $B_i\lesssim A_i\lesssim B_i$.

\section{Preliminaries}
The radial derivative of a holomorphic function $f:\mathbb{B}\rightarrow \mathbb{C}$ is defined as
$$Rf(z)=\sum_{i=1}^n z_i \dfrac{\partial f}{\partial z_i} (z).$$
The radial derivative is a very important concept of differentiation on the unit ball. The reason for this naming is that
$$Rf(z)=\lim_{r\rightarrow 0} \dfrac{f(z+rz)-f(z)}{r},$$
where $r$ is a real parameter and so $z+rz$ is a radial variation of $z$. Moreover, the $k$-th radial differentiation operator on $H(\mathbb{B})$ is denoted by $R^{(k)}$ and defined as usual.

Let $\beta: \mathbb{B} \times \mathbb{B} \rightarrow [0,\infty)$ be the Bergman metric,  $\varphi_z$ be an automorphism from $\mathbb{B}$ onto $\mathbb{B}$, satisfying $\varphi_z(0)=z$, $\varphi_z(z)=0$, and $\varphi_z\circ \varphi_z$ be the identity function. It is well-known that
\begin{equation*}
\beta(z,\zeta)= \dfrac{1}{2} \log \dfrac{1+|\varphi_z(\zeta)|}{1-|\varphi_z(\zeta)|}.
\end{equation*}
 If $r>0$ and $z\in \mathbb{B}$, the Bergman ball centered at $z$ with radius $r$ is defined as $B(z,r)=\{\zeta\in \mathbb{B}: \ \beta(z,\zeta)<r\}$.
For every $r>0$, the following inequalities hold:
\begin{equation} \label{e23}
 (1-|z|^2)^{n+1} \simeq (1-|\zeta|^2)^{n+1}\simeq V(B(z,r))\simeq V(B(\zeta,r)),
\end{equation}
 and
\begin{equation} \label{e24}
 |1-\langle z,\zeta\rangle|\simeq (1-|z|^2),
\end{equation}
for all $z\in \mathbb{B}$ and $\zeta\in B(z,r)$.

For $a\in \mathbb{B}$, let $P a$ be the radial projection of $a$ onto $\mathbb{S}$.
The pseudo–metric on $\mathbb{S}$ is defined as $\rho(z,\zeta) = |1-\langle z,\zeta\rangle|$.
As usual,
 $$D(z,r):=\{\zeta \in \mathbb{S} : \rho(z,\zeta) < r\}.$$
 For $z\in \mathbb{B}\setminus \{0\}$, let $Q_z=D(Pz,1-|z|)$. Then, the Carleson square $S(z)$ will be defined as
 $$S(z)=\{ \zeta\in \mathbb{B}, \ P\zeta\in Q_z, \ |z|<|\zeta|<1\},$$
and $S(0)=\mathbb{B}$. From \cite[Lemma 2]{du1}, if $\omega\in \widehat{\mathcal{D}}$, then
\begin{equation} \label{e15}
\widehat{\omega}(z) (1-|z|^2)^{n}\simeq V_\omega(S(z)).
\end{equation}

Let $\mu$ and $B$ be a Borel measure and a Borel subset of $\mathbb{B}$, respectively. Then, we set $\mu(B)=\int_B d\mu$.  For the function $g\in L^p(\mathbb{B},\mu)$, the average of $g$ based on $\mu$ is defined as
$$\langle g\rangle_{\mu,B}:= \dfrac{1}{\mu(B)} \int_B g(z) d\mu(z),$$
For convenience, $\langle h\rangle_{B}=\langle h\rangle_{V,B}$ for $h\in L^p(\mathbb{B},V)$.

\subsection{Weights}
If $\omega\in \mathcal{R}$, then for each $0\leq s<1$, there exists some constant $C=C_{s,\omega}>1$ such that
\begin{equation} \label{e16}
C^{-1} \omega(t)\leq \omega(r)\leq C \omega(t), \ \ 0\leq r\leq t\leq r+s(1-r)<1.
\end{equation}
For a proof, see \cite[Page 9 (i)]{pelaez1}. Equivalently, the regular weights are invariant in  the Bergman balls, that is, for some (all) $r>0$,
\begin{equation}  \label{e17}
\omega(z)\simeq \omega(\zeta), \qquad \zeta\in B(z,r).
\end{equation}
Indeed, since $B(z,r)$ is an ellipsoid of the (Euclidean) center $\frac{1-r^2}{1-r^2|z|^2}z$, its intersection with the line $\mathbb{C} z$ is a Euclidean disc of radius
\begin{equation*}
r \dfrac{1-|z|^2}{1-r^2|z|^2},
\end{equation*}
and its intersection with the affine subspace through z orthogonal to $\mathbb{C} z$ is a Euclidean ball of radius
\begin{equation*}
r \sqrt{\dfrac{1-|z|^2}{1-r^2|z|^2}},
\end{equation*}
one can see that (\ref{e16}) and (\ref{e17}) are equivalent. This and (\ref{e15}) imply that, when $\omega\in \mathcal{R}$, for all $r>0$, there holds
\begin{equation} \label{e19}
V_\omega(B(z,r))\simeq \omega(z)(1-|z|)^{n+1}\simeq \widehat{\omega}(z)(1-|z|)^n\simeq V_\omega(S(z)).
\end{equation}
Another known result is that for $\omega\in \widehat{\mathcal{D}}$, there exists some $t_0>0$ such that for all $t>t_0$ we have
\begin{equation} \label{e26}
\int_\mathbb{B} \dfrac{\omega(\zeta)}{|1-\overline{z}\zeta|^{t}} dV(\zeta)\simeq \dfrac{\widehat{\omega}(z)}{(1-|z|)^{t-1}}, \qquad z\in \mathbb{B}.
\end{equation}

\begin{proposition} \label{p3}
Let $0<p<\infty$ and $\omega\in \widehat{\mathcal{D}}$. Consider the weight $W(r)=W_\omega(r)=\widehat{\omega}(r)/(1-r)$. Then, the norms $\|.\|_{W,p}$ and $\|.\|_{\omega,p}$ are equivalent on $H(\mathbb{B})$ if and only if $\omega\in \mathcal{D}$.
Moreover, if $\omega\in \mathcal{D}$, then $W\in \mathcal{R}$.
\end{proposition}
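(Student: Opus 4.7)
The plan is to reduce the proposition to a single dyadic comparison: for $\omega\in\widehat{\mathcal{D}}$, the estimate $\widehat{W}(r)\simeq\widehat{\omega}(r)$ holds if and only if $\omega\in\widecheck{\mathcal{D}}$. Both the ``moreover'' statement and the norm equivalence on $H(\mathbb{B})$ will then be short consequences.

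For the dyadic step, fix $K>1$ and set $r_k=1-K^{-k}(1-r)$. Since $\omega\in\widehat{\mathcal{D}}$ makes $\widehat{\omega}(s)$ vary by a bounded factor on each block $[r_k,r_{k+1})$, while $1/(1-s)\simeq K^k/(1-r)$ there, a block-by-block computation yields $\widehat{W}(r)\simeq\sum_{k\ge 0}\widehat{\omega}(r_k)$. If $\omega\in\widecheck{\mathcal{D}}$ and $K$ is its defining constant, iterating $\widehat{\omega}(r_{k+1})\le C^{-1}\widehat{\omega}(r_k)$ renders the series geometric with sum $\simeq\widehat{\omega}(r)$, so $\widehat{W}\simeq\widehat{\omega}$; combined with the identity $(1-r)W(r)=\widehat{\omega}(r)$, this is precisely $W\in\mathcal{R}$, proving the ``moreover'' claim. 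Conversely, if $\widehat{W}(r)\simeq\widehat{\omega}(r)$, the partial-sum bound $(N+1)\widehat{\omega}(r_N)\le\sum_{k=0}^{N}\widehat{\omega}(r_k)\lesssim\widehat{\omega}(r)$ forces $\widehat{\omega}(r_N)\le\widehat{\omega}(r)/2$ once $N$ is taken large, i.e.\ $\omega\in\widecheck{\mathcal{D}}$ with modified constants.

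For the norm equivalence I would pass to polar coordinates and use the monotonicity of $\rho\mapsto M_p^p(f,\rho)$ (from plurisubharmonicity) together with the same dyadic annuli $A_k=\{z:r_k\le|z|<r_{k+1}\}$. The block bracketing gives
\[
\|f\|_{\omega,p}^p \simeq \sum_k M_p^p(f,r_k)\,[\widehat{\omega}(r_k)-\widehat{\omega}(r_{k+1})], \qquad \|f\|_{W,p}^p \simeq \sum_k M_p^p(f,r_k)\,\widehat{\omega}(r_k),
\]
where the first uses $\int_{r_k}^{r_{k+1}}\omega\,d\rho=\widehat{\omega}(r_k)-\widehat{\omega}(r_{k+1})$ and the second the block estimate. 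Under $\widecheck{\mathcal{D}}$ the two coefficient sequences are comparable, giving $(\Leftarrow)$. For $(\Rightarrow)$, note first that norm equivalence applied to $f\equiv 1$ gives $\widehat{W}(0)<\infty$, so $W$ is itself an integrable radial weight; the recursion $\widehat{W}(r)\simeq\widehat{\omega}(r)+\widehat{W}((1+r)/2)$ (a rearrangement of the dyadic sum) together with $\widehat{\omega}(r)\lesssim\widehat{\omega}((1+r)/2)\le\widehat{W}((1+r)/2)$ then forces $W\in\widehat{\mathcal{D}}$. This makes (\ref{e26}) applicable to both weights, so testing on $f_a(z)=(1-\langle z,a\rangle)^{-c}$ with $cp>t_0$ yields $\|f_a\|_{\omega,p}^p\simeq\widehat{\omega}(a)/(1-|a|)^{cp-1}$ and $\|f_a\|_{W,p}^p\simeq\widehat{W}(a)/(1-|a|)^{cp-1}$; the norm equivalence delivers $\widehat{W}(a)\simeq\widehat{\omega}(a)$, and the first paragraph supplies $\omega\in\widecheck{\mathcal{D}}$.

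The main obstacle is establishing the two bracketed dyadic representations: although $\rho\mapsto M_p^p(f,\rho)$ may grow rapidly, the two natural bracketing sums $\sum_k M_p^p(f,r_k)\mu_k$ and $\sum_k M_p^p(f,r_{k+1})\mu_k$ differ only by an index shift and so coincide up to constants so long as the weights $\mu_k$ do not collapse. For $\omega$ these weights are $\widehat{\omega}(r_k)-\widehat{\omega}(r_{k+1})$, and only $\widecheck{\mathcal{D}}$ guarantees they remain comparable to $\widehat{\omega}(r_k)$; when $\widecheck{\mathcal{D}}$ fails, already the constant function $f\equiv 1$ distinguishes the two norms, as the rapidly increasing example $\omega(r)=1/((1-r)\log^2(e/(1-r)))\in\mathcal{I}$ demonstrates, where $\|1\|_{W,p}=\infty$ but $\|1\|_{\omega,p}<\infty$.
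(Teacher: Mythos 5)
The paper does not actually prove Proposition \ref{p3}: it only points to \cite[Proposition 5]{pelaez3} and leaves the details to the reader. Your argument is a correct, self-contained proof, and it follows what is essentially the route of that cited result: a dyadic block computation showing that, for $\omega\in\widehat{\mathcal{D}}$, one has $\widehat{W}(r)\simeq\widehat{\omega}(r)$ precisely when $\omega\in\widecheck{\mathcal{D}}$ (which, via the identity $(1-r)W(r)=\widehat{\omega}(r)$, is exactly the regularity of $W$); monotone integral means over the annuli $\{z: r_k\le|z|<r_{k+1}\}$ for the sufficiency of $\omega\in\mathcal{D}$; and the test functions $(1-\langle z,a\rangle)^{-c}$ together with \eqref{e26} for the necessity. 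Two small points to tighten. First, the displayed claim $\|f\|_{\omega,p}^p\simeq\sum_k M_p^p(f,r_k)\,[\widehat{\omega}(r_k)-\widehat{\omega}(r_{k+1})]$ is stated before the hypothesis that justifies its upper half: for a general $\omega\in\widehat{\mathcal{D}}$ only ``$\gtrsim$'' holds there, and the comparability of the two shifted bracketing sums needs $\widehat{\omega}(r_k)-\widehat{\omega}(r_{k+1})\simeq\widehat{\omega}(r_k)$, i.e.\ $\widecheck{\mathcal{D}}$; you do acknowledge this in your closing paragraph, but the display should be flagged as valid only in the $(\Leftarrow)$ direction, where $\omega\in\mathcal{D}$ is already in hand. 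Second, since the paper's definition of $\mathcal{R}$ presupposes membership in $\widehat{\mathcal{D}}$, the ``moreover'' claim also requires the one-line observation that $\widehat{W}\simeq\widehat{\omega}$ transfers the doubling of $\widehat{\omega}$ to $\widehat{W}$. With these caveats the necessity step is also complete: testing on $f\equiv 1$ gives integrability of $W$, the recursion $\widehat{W}(r)\simeq\widehat{\omega}(r)+\widehat{W}(\tfrac{1+r}{2})$ gives $W\in\widehat{\mathcal{D}}$ so that \eqref{e26} is available for both weights, and the ratio of the two test-function norms yields $\widehat{W}\simeq\widehat{\omega}$, hence $\omega\in\widecheck{\mathcal{D}}$ by your first paragraph. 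Since the paper itself supplies no argument, your write-up is strictly more informative than what is printed, and I find no genuine gap in it.
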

\begin{proof}
The proof is similar to that of \cite[Proposition 5]{pelaez3}. Hence, we leave the details to the interested readers.
\end{proof}

\subsection{Dyadic structure on $\mathbb{B}$}
The discussions of this subsection come from \cite{arcozzi} and \cite{wick1}. Let
 $$S_r=\partial B(0,r)=\{w\in \mathbb{B}: \ \beta(0,w)=r\}=\Big{\{}w\in \mathbb{B}: \ \frac{1}{2} \log \frac{1+|w|}{1-|w|}=r\Big{\}}.$$
Fix $\theta,\lambda>0$. For $N\in \mathbb{N}$, we can find a
sequence of points $E_N = \{w_j^N\}_{j=1}^{J_N}$, and a corresponding sequence of Borel subsets $\{X_j^N\}^{J_N}_{j=1}$
of $S_{N\theta}$, that satisfy:
\begin{itemize}
\item[(i)] $S_{N\theta} = \bigcup^{J_N}_{j=1} X^N_j$,
\item[(ii)] $X^N_j\bigcap X^N_i=\emptyset, \ \ i\neq j$,
\item[(iii)] $S_{N\theta}\bigcap B(w_j^N, \lambda) \subset X^N_j \subset S_{N\theta}\bigcap B(w_j^N, C\lambda)$.
\end{itemize}

For $z\in \mathbb{B}$, let $P_r z$ denote the radial projection of $z$ onto the sphere $S_r$.
 We now define the set $K_j^N$ as follows:
\begin{eqnarray*}
K_1^0&:=&\{z\in\mathbb{B}: \ \beta(0,z)<\theta\},\\
K_j^N&:=&\{z\in\mathbb{B}: \ N\theta\leq \beta(0,z)<(N+1)\theta \ and \ P_{N\theta}z\in X_j^N\}, \ N\geq 1, \ j\geq 1.
\end{eqnarray*}
We will refer to the subset $K^N_j$ of $\mathbb{B}$ as a unit cube centered at $c^N_j=P_{(N+\frac{1}{2})\theta} w_j^N$,
while $K^0_1$ is centered at $0$. We say that $c^{N+1}_i$
is a child of $c^{N}_j$ if $P_{N\theta} c^{N+1}_i\in X^N_j$.

A tree structure is a set $\mathcal{T} := \{c^N_j\}$ which contains the centers of the cubes.
We will denote the elements of the tree by $\alpha$ and $\beta$, while $K_\alpha$ will be the cube with
center $\alpha$. There is no problem to abuse the notation and use $\alpha$ to denote both an element of
the tree $\mathcal{T}$ and the center of the corresponding cube or any element of the
cube. The notation $\beta\geq \alpha$ means that $\beta$ is a
descendant of $\alpha$, that is, there are $\alpha_1,...,\alpha_l\in \mathcal{T}$, such that $\beta$ is a child of $\alpha_l$, $\alpha_l$ is a child of $\alpha_{l-1}$, ..., and $\alpha_1$ is a child of $\alpha$. A $dyadic \ tent$ under $K_\alpha$ is defined as:
$$\widehat{K_\alpha}:= \bigcup_{\beta\in \mathcal{T}:\beta\geq \alpha} K_\beta.$$

\begin{lemma} \label{l7}
 The tree $\mathcal{T}$, constructed above with positive parameters $\lambda$ and $\theta$, satisfies the following properties.
\begin{itemize}
\item[(a)] $\mathbb{B} =\bigcup_{\alpha\in \mathcal{T}} K_\alpha$ and the sets $K_\alpha$ are pairwise disjoint. Furthermore, there are
constants $C_1$ and $C_2$ depending on $\lambda$ and $\theta$ such that for all $\alpha \in \mathcal{T}$ there
holds:
$$B(\alpha,C_1) \subset K_\alpha \subset B(\alpha,C_2);$$
\item[(b)] For any $R>0$, the balls $B(\alpha,R)$ satisfy the finite overlap condition
$$\sum_{\alpha\in \mathcal{T}} \mathbf{1}_{B(\alpha,R)} (z)\leq C_R, \qquad z\in \mathbb{B}.$$
\item[(c)]  $\widehat{K_\alpha}\simeq S(\alpha)$.
\item[(d)] If $\omega\in\widehat{\mathcal{D}}$, then $V_\omega(S(\alpha))\simeq V_\omega(\widehat{K_\alpha})$.
\end{itemize}
\end{lemma}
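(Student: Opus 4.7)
The plan is to verify (a)--(d) in order, exploiting the Bergman-metric structure of $\mathbb{B}$ together with the tree construction via the shells $S_{N\theta}$ and the sphere decompositions $\{X_j^N\}$. The decomposition $\mathbb{B}=\bigcup_{\alpha\in\mathcal{T}}K_\alpha$ with pairwise disjoint $K_\alpha$ is essentially built into the definition: the Bergman shells $\{z\in\mathbb{B}:\, N\theta\le\beta(0,z)<(N+1)\theta\}$, $N\ge 0$, form a disjoint cover of $\mathbb{B}$, and inside each shell the $K_j^N$ are radial lifts of the disjoint decomposition $\{X_j^N\}$ of $S_{N\theta}$. For the two-sided inclusion $B(\alpha,C_1)\subset K_\alpha\subset B(\alpha,C_2)$, I would use the M\"obius invariance of $\beta$ under $\varphi_\alpha$ to reduce to small Bergman balls around $0$. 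The upper inclusion follows by combining a radial displacement of at most $\theta/2$ from $\alpha$ with a spherical displacement of at most $C\lambda$ coming from property (iii). The lower inclusion is obtained by taking $C_1$ small enough that every $\zeta\in B(\alpha,C_1)$ lies in the same Bergman shell as $\alpha$ and projects radially onto $S_{N\theta}$ inside $B(w_j^N,\lambda)\cap S_{N\theta}\subset X_j^N$.

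Part (b) then reduces to uniform Bergman separation of the centers: property (iii) and the disjointness of the $X_j^N$ give $\beta(w_j^N,w_i^N)\gtrsim\lambda$ for $i\ne j$ within each shell, while centers in distinct shells are $\beta(0,\cdot)$-separated by at least $\theta$. Using (\ref{e23}) together with the inclusion $B(\alpha,C_1)\subset K_\alpha$ from (a), the number of centers inside any $B(z,R)$ is then bounded uniformly in $z$. For (c), the target is two-sided comparability of the dyadic tent $\widehat{K_\alpha}$ and the Carleson square $S(\alpha)$, in the dilation sense of the paper's conventions. Any descendant $\beta\geq\alpha$ lies in a shell of $\beta(0,\cdot)$-radius at least $\beta(0,\alpha)$, so by (\ref{e23}) $|\beta|\geq|\alpha|$ up to a controlled factor, while $P\beta\in\mathbb{S}$ is forced into a constant dilate of $Q_\alpha$ because passing from a parent to a child adds $\theta$ to the shell radius and at most a fixed $\lambda$-amount on the sphere; this gives the inclusion of $\widehat{K_\alpha}$ in a constant dilate of $S(\alpha)$. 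The reverse inclusion follows because any point of $S(\alpha)$ sits in some $K_\gamma$ descended from an ancestor of $\alpha$ at bounded depth, so $S(\alpha)$ is contained in a bounded union of tents comparable to $\widehat{K_\alpha}$.

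Finally (d) is a short consequence of (c) and (\ref{e15}): for $\omega\in\widehat{\mathcal{D}}$ one has $V_\omega(S(z))\simeq\widehat{\omega}(z)(1-|z|)^n$, and the doubling of $\widehat{\omega}$ makes this quantity insensitive, up to constants, to the dilations of the Carleson square produced in (c); therefore $V_\omega(S(\alpha))\simeq V_\omega(\widehat{K_\alpha})$. The main obstacle throughout is the inner inclusion $B(\alpha,C_1)\subset K_\alpha$ in (a): Bergman balls are increasingly elongated ellipsoids as $|\alpha|\to 1$, so a naive Euclidean argument will not yield a uniform $C_1$. The cleanest route is to work in the $\varphi_\alpha$-coordinates, where both the shell and the spherical conditions rescale correctly, so that a single constant $C_1=C_1(\theta,\lambda)$ works uniformly in $\alpha$; once (a) is in place, the remaining parts (b)--(d) reduce to book-keeping with the Bergman metric, the dyadic tree, and the doubling property of $\omega$.
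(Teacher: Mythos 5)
Your proposal is sound, but it takes a more self-contained route than the paper, which disposes of (a) and (b) by citing Arcozzi--Rochberg--Sawyer \cite{arcozzi} and proves (c) with a single observation: there is a $k\in\mathbb{N}$, independent of $j$ and $N$, such that for every $X_j^N$ some $X_i^{N+k}$ projects radially inside it, from which the two-sided comparability $\widehat{K_\alpha}\simeq S(\alpha)$ is read off; (d) is then declared immediate from (a), (c) and doubling. What your approach buys is an actual verification of the geometric facts the paper outsources, in particular the inner inclusion $B(\alpha,C_1)\subset K_\alpha$, where your instinct to pass to $\varphi_\alpha$-coordinates is exactly what makes $C_1$ uniform in $\alpha$; your volume-counting argument for (b) via the disjoint balls $B(\alpha,C_1)$ and (\ref{e23}) is also the standard and correct one. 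The one step you should make explicit in (c) is that the per-generation spherical displacement is \emph{not} ``a fixed $\lambda$-amount'' in the pseudo-metric $\rho$: a Bergman ball of radius $C\lambda$ centred at level $N+l$ projects to a $\rho$-ball of radius $\lesssim e^{-2(N+l)\theta}$, so the total displacement from $\alpha$ to any descendant is a convergent geometric series comparable to $1-|\alpha|$; if the displacement really were a fixed amount per generation the sum over infinitely many generations would diverge and the tent would not stay inside a bounded dilate of $Q_\alpha$. With that clarification, and with the reverse inclusion in (c) understood (as you do) at the level of a bounded union of comparable tents --- which is all that is needed to deduce (d) via (\ref{e15}) and doubling --- your argument is complete and, unlike the paper's, does not lean on an external reference.
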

\begin{proof}
(a) and (b) have been proven in \cite{arcozzi}. From the definition of the Bergman metric and the tree structure, there is a $k\in \mathbb{N}$ such that for any $j$ and $N$ there is an $i$ such that
$$P_{N\theta} X_i^{N+k}\subseteq X_j^N$$
Since $k$ is independent of $i,j$ and $N$, we conclude (c). In addition, it is easy to see that (d) follows from  (a), (c), and the assumption that $\omega \in \widehat{\mathcal{D}}$.
\end{proof}

  A  $dyadic \ grid$ on $\mathbb{S}$ with calibre $\delta$ is a collection of Borel
subsets $\mathfrak{D} := \{Q^k_i\}_{i,k\in \mathbb{Z}}$ and points $\{z^k_i\}_{i,k\in \mathbb{Z}}$ in $\mathbb{S}$ that satisfy:
\begin{itemize}
\item[(i)] There are constants $c_1$ and $C_2$ such that for every $k, i \in \mathbb{Z}$ there holds:
$$D(z^k_i , c_1\delta^k ) \subset Q^k_i\subset D(z^k_i ,C_2\delta^k ).$$
\item[(ii)] For all $k\in\mathbb{Z}$, there holds $\mathbb{S} = \bigcup_{i\in\mathbb{Z}} Q^k_i$ and the sets are disjoint.
\item[(iii)] If $Q, R\in \mathfrak{D}$ and $Q \bigcap R\neq \emptyset$, then either $Q \subset R$ or $R \subset Q$.
\end{itemize}

 Consider $\mathfrak{D}$ as the dyadic grid on $\mathbb{S}$ with calibre $\delta$ as mentioned above. Then, from the proof of \cite[Lemma 3]{wick1}, this dyadic system creates a Bergman tree with suitable parameters $\theta$ and $\lambda$ which depend on $\delta$. Indeed, we can construct a tree structure as follows:
 $$X_j^N:=P_{N\theta} Q_j^N, \ \ c_j^N:= P_{(N+\frac{1}{2})\theta} z_j^N,$$
  there are $\lambda_2>\lambda_1>0$ so that
 $$S_{N\theta}\bigcap B(P_{N\theta} z_j^N, \lambda_1) \subset X^N_j \subset S_{N\theta}\bigcap B(P_{N\theta} z_j^N, \lambda_2)$$
Thus, if $\mathfrak{D} := \{Q^k_i\}_{i,k\in \mathbb{Z}}$ and $\mathcal{T}:= \{c^N_j\}_{j,N\in \mathbb{Z}}$ are constructed as above, then the function $c:\mathfrak{D}\rightarrow \mathcal{T}$, defined as $c(Q_j^N)=c_j^N$, is one-to-one and onto.

The following lemma is a special case of \cite[Theorem 4.1]{hytonen2} and also appears in \cite[Lemma 2]{wick1}.
\begin{lemma}\label{l8}
For every $\delta > 0$, there is an $M \in \mathbb{N}$ such that there is
a collection of dyadic systems of cubes $\{\mathfrak{D}^i\}_{i=1}^M$ with the following property: For every disc
$D(z, r )$, there is a $1\leq i \leq M$ such that there is a dyadic cube
$Q^k_j\in \mathfrak{D}^i$ with $D(z, r )\subset Q^k_j$ and $\delta^k \simeq r$ where the implied constants are independent of $z$
and $r$.
\end{lemma}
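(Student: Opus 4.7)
The approach is to apply the Hytönen--Kairema theorem on adjacent dyadic systems to the space of homogeneous type $(\mathbb{S},\rho,\sigma)$, where $\sigma$ denotes the surface measure on $\mathbb{S}$. First I would verify that this is indeed a space of homogeneous type: the doubling condition $\sigma(D(z,2r)) \lesssim \sigma(D(z,r))$ follows immediately from the standard estimate $\sigma(D(z,r)) \simeq r^n$ combined with the quasi-triangle inequality for the non-isotropic pseudo-metric $\rho$. With this setup in place, Christ's classical construction produces a single reference dyadic grid $\mathfrak{D}^0 = \{Q^k_j\}$ of calibre $\delta$ on $\mathbb{S}$, satisfying the three listed properties (i)--(iii), with cubes of diameter comparable to $\delta^k$ at generation $k$.

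The key step is to produce $M = M(\delta)$ shifted copies $\mathfrak{D}^1,\dots,\mathfrak{D}^M$ of this reference grid such that every $\rho$-disc is nested in a dyadic cube of comparable size in at least one of them. Following Hytönen--Kairema, the shifts are realized by varying the choice of the skeleton centers $\{z^k_i\}$ at each scale: one selects $M$ suitable $\delta^k$-nets in $\mathbb{S}$ and runs Christ's construction from each, producing $M$ grids whose cube-boundaries are offset from one another. The property to be extracted is that for every $z \in \mathbb{S}$ and every $k$, at least one index $i \in \{1,\dots,M\}$ has the feature that the distance from $z$ to the boundary of the level-$k$ cube of $\mathfrak{D}^i$ containing $z$ is at least a fixed constant times $\delta^k$. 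Once this separation is in hand, the quasi-triangle inequality for $\rho$ immediately yields $D(z,r) \subset Q^k_j$ for the cube $Q^k_j \in \mathfrak{D}^i$ of generation $k$ containing $z$, provided we choose $k$ so that $\delta^k \simeq r$ with a large enough implicit constant absorbing the quasi-triangle distortion.

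The main obstacle is precisely establishing the boundary-separation property with $M$ finite and depending only on $\delta$ (and the doubling constant of $(\mathbb{S},\rho,\sigma)$). The standard resolution is a probabilistic / pigeonhole argument: randomizing the skeletons at each scale and using the doubling property, one shows that the expected fraction of points lying in the $\varepsilon\delta^k$-neighborhood of every dyadic boundary simultaneously across all $M$ grids is strictly less than $1$ for $\varepsilon$ small and $M$ large enough. Derandomizing gives a deterministic family $\mathfrak{D}^1,\dots,\mathfrak{D}^M$ with the required property. This is precisely the content of \cite[Theorem 4.1]{hytonen2}, and once the doubling setup on $(\mathbb{S},\rho,\sigma)$ is verified as above, the present lemma follows by specialization; I would therefore present the argument as a short reduction to that theorem, reproducing only the one-line computation needed to identify $\delta^k \simeq r$ with the scale of the target disc.
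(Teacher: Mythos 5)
Your proposal is correct and takes essentially the same route as the paper: the paper gives no proof at all, simply noting that the lemma is a special case of \cite[Theorem 4.1]{hytonen2} (and appears as \cite[Lemma 2]{wick1}), which is precisely the reduction you carry out after verifying that $(\mathbb{S},\rho,\sigma)$ is a doubling quasi-metric measure space. Your additional sketch of the adjacent-grids construction is a faithful account of what happens inside the cited theorem rather than a new argument.
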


\begin{lemma}\label{l2}
Let $\omega\in \widehat{\mathcal{D}}$. For every $\delta > 0$, there is a finite collection of $dyadic$ $grids$ $\{\mathfrak{D}^i\}_{i=1}^M$ with the following property: For all $z\in \mathbb{B}$, there are $\zeta\in \mathbb{B}$ and $1\leq i \leq M$  such that there is a dyadic cube $Q \in\mathfrak{D}^i$ where  $S(z)\subseteq \widehat{K_{c(Q)}}\subseteq S(\zeta)$ and $V_\omega(\widehat{K_{c(Q)}})\simeq V_\omega(S(z))\simeq V_\omega(S(\zeta))$.
\end{lemma}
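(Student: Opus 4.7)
The plan is to transfer Lemma \ref{l8} on dyadic grids on $\mathbb{S}$ to the associated Bergman trees via the canonical correspondence $c:\mathfrak{D}\to\mathcal{T}$ described just before Lemma \ref{l8}. Given $\delta>0$, I would take the grids $\{\mathfrak{D}^i\}_{i=1}^M$ supplied by Lemma \ref{l8} and, for each of them, build the associated Bergman tree with the parameters $\theta,\lambda$ determined by $\delta$. For $z\in\mathbb{B}$, set $r=1-|z|$, so that the boundary footprint of the Carleson square $S(z)$ is the pseudo-ball $D(Pz,r)\subset\mathbb{S}$. Applying Lemma \ref{l8} to $D(Pz,r)$ produces an index $1\leq i\leq M$ and a dyadic cube $Q=Q_j^N\in\mathfrak{D}^i$ with $D(Pz,r)\subseteq Q$ and $\delta^N\simeq r$; the associated tree center $c(Q)=c_j^N$ then satisfies $Pc(Q)=z_j^N$ and $1-|c(Q)|\simeq\delta^N\simeq r$.

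By Lemma \ref{l7}(c), $\widehat{K_{c(Q)}}\simeq S(c(Q))$ at the level of sets. Since $D(Pz,r)\subseteq Q$ and the boundary footprint of $\widehat{K_{c(Q)}}$ essentially covers $Q$ (its projection onto $\mathbb{S}$ is the union of the $Q_i^{N'}\subseteq Q$ corresponding to descendants in the tree), the inclusion $S(z)\subseteq\widehat{K_{c(Q)}}$ reduces to a matching of radial heights, namely that the starting height of the tent lies at modulus at most $|z|$; if necessary I would replace $Q$ by a dyadic ancestor of bounded generation to guarantee this, since moving up the tree by a controlled number of steps enlarges the tent and lowers its starting radius by a bounded multiplicative factor. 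For the outer inclusion, I would pick $\zeta$ on the radius through $Pc(Q)$ with $1-|\zeta|=C(1-|c(Q)|)$ for a constant $C>0$ large enough that $\widehat{K_{c(Q)}}\subseteq S(\zeta)$; the existence of such a $C$ is exactly the content of the other direction of the set-comparability $\widehat{K_{c(Q)}}\simeq S(c(Q))$, and by construction $1-|\zeta|\simeq 1-|c(Q)|\simeq 1-|z|$.

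For the volume comparison, combining \eqref{e15} with Lemma \ref{l7}(d) yields $V_\omega(S(z))\simeq\widehat{\omega}(|z|)(1-|z|)^n$, $V_\omega(\widehat{K_{c(Q)}})\simeq\widehat{\omega}(|c(Q)|)(1-|c(Q)|)^n$, and $V_\omega(S(\zeta))\simeq\widehat{\omega}(|\zeta|)(1-|\zeta|)^n$. Since the three heights $1-|z|$, $1-|c(Q)|$, $1-|\zeta|$ are pairwise comparable, the doubling condition defining $\widehat{\mathcal{D}}$ forces $\widehat{\omega}$ at those heights to be comparable as well, and the three $\omega$-volumes therefore agree up to constants. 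The main obstacle I anticipate is bookkeeping rather than substance: tracking the various constants so that the transfer from Lemma \ref{l8} to the Bergman tree produces honest set containments in both directions, rather than merely containments up to constants; once the right ancestor level of $Q$ and the constant $C$ above are fixed, the remaining verifications are routine.
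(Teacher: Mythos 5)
Your proposal is correct and follows essentially the same route as the paper: apply Lemma \ref{l8} to the boundary footprint of $S(z)$, pass to the associated tree cube via the correspondence $c$, use Lemma \ref{l7}(c) to compare the tent with a Carleson square, choose $\zeta$ for the outer inclusion, and invoke the doubling condition together with \eqref{e15} for the volume comparability. Your extra care about matching the radial starting height of the tent (passing to a bounded-generation ancestor if needed) is a detail the paper's proof glosses over, but it does not change the argument.
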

\begin{proof}
From Lemma \ref{l8}, there
is a $dyadic$ $grid$ $\mathfrak{D}^i$ from the finite collection and a $Q \in\mathfrak{D}^i$ such that the dyadic tent $\widehat{K_{c(Q)}}$
contains the Carleson square $S(z)$ and $\widehat{K_{c(Q)}}\simeq S(z)$, that is, there is a positive constant $C$ such that
$$ S(z)\subseteq \widehat{K_{c(Q)}}\subseteq C S(z).$$
One can see that there is a  $\zeta\in \mathbb{B}$ so that $CS(z)\subseteq S(\zeta)$ and $S(\zeta)\simeq S(z)$. Thus, the desired inclusions hold. Now, since $\omega\in \widehat{\mathcal{D}}$, we conclude that $V_\omega(\widehat{K_{c(Q)}})\simeq V_\omega(S(z))\simeq V_\omega(S(\zeta))$.
\end{proof}

\textbf{Notation.} From now on, for every calibre $\delta>0$, we assume that $\mathfrak{D}_\delta=\{\mathfrak{D}_\delta^i\}_{i=1}^M$ is the finite collection of $dyadic$ $grids$ obtained in Lemma \ref{l2}.

\subsection{Maximal operator}
For $\omega\in \widehat{\mathcal{D}}$ and $\varphi\in L^p(\mathbb{B},V_\omega)$, we define
\begin{align*}
\mathcal{M}_{\omega,\mathfrak{D}_\delta}(\varphi)(z)&=\sup_{z\in \widehat{K_{c(Q)}}, Q\in \mathfrak{D}_\delta} \dfrac{1}{V_\omega(\widehat{K_c(Q)})} \int_{\widehat{K_c(Q)}} |\varphi(\zeta)|dV_\omega(\zeta).
\end{align*}
and
\begin{align*}
\mathcal{M}_{\omega}(\varphi)(z)= \sup_{z\in S(a), a\in \mathbb{B}} \dfrac{1}{V_\omega(S(a))} \int_{S(a)} |\varphi(\zeta)|dV_\omega(\zeta).
\end{align*}
From Lemma \ref{l2}, $\mathcal{M}_{\omega}(\varphi)(z)\simeq \mathcal{M}_{\omega,\mathfrak{D}_\delta}(\varphi)(z)$.
 Therefore, \cite[Lemma 5]{du1} implies that for every $0<p<\infty$, there is some $C=C(p,\omega,n)>0$, such that
\begin{equation} \label{e21}
|f(z)|^p\leq C M_{\omega,\mathfrak{D}_\delta}(|f|^p)(z), \ \ f\in H(\mathbb{B}).
\end{equation}

Let $t\in \mathbb{R}$, $\mu$ be a positive Borel measure on $\mathbb{B}$, and  $\mathfrak{D}$ be a  $dyadic$ $grid$. We define
\begin{align*}
\mathcal{M}_{t,\mu,\mathfrak{D}}(\varphi)(z)&=\sup_{z\in \widehat{K_{c(Q)}}, Q\in \mathfrak{D}} \mu(\widehat{K_{c(Q)}})^t \langle \varphi\rangle_{\mu,\widehat{K_{c(Q)}}}.
\end{align*}
The proof of the following lemma is standard. Thus, we give its proof in the appendixes (subsection 7.1)
\begin{lemma}\label{l5}
Let $1<p\leq q<\infty$ and $0\leq t<1$, where $\frac{1}{p}-\frac{1}{q}=t$. Then $\mathcal{M}_{t,\mu,\mathfrak{D}}:L^p_\mu\rightarrow L^q_\mu$ is bounded.
\end{lemma}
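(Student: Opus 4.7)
The plan is to deduce the lemma from the Marcinkiewicz interpolation theorem applied between two endpoint bounds for $\mathcal{M}_{t,\mu,\mathfrak{D}}$: a weak-type $(1,1/(1-t))$ estimate and a strong-type $(1/t,\infty)$ estimate. Since $\tfrac{1}{p}-\tfrac{1}{q}=t$ together with $1<p\leq q<\infty$ forces $1<p<1/t$ (reading $1/t=\infty$ when $t=0$), every admissible pair $(p,q)$ lies in the open interpolation window. In the degenerate case $t=0$ the same argument gives the weak $(1,1)$ endpoint and interpolation with the trivial $L^\infty\to L^\infty$ bound produces the standard dyadic maximal function estimate.

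The strong endpoint is immediate from H\"{o}lder's inequality applied to each tent: for every $Q\in\mathfrak{D}$,
\begin{equation*}
\mu(\widehat{K_{c(Q)}})^{t}\langle|\varphi|\rangle_{\mu,\widehat{K_{c(Q)}}}\leq \mu(\widehat{K_{c(Q)}})^{t-1}\|\varphi\|_{L^{1/t}(\mu)}\,\mu(\widehat{K_{c(Q)}})^{1-t}=\|\varphi\|_{L^{1/t}(\mu)},
\end{equation*}
so $\|\mathcal{M}_{t,\mu,\mathfrak{D}}\varphi\|_{L^{\infty}(\mu)}\leq \|\varphi\|_{L^{1/t}(\mu)}$ with constant one.

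The main step is the weak endpoint. Fix $\lambda>0$ and set $E_\lambda:=\{z\in\mathbb{B}:\mathcal{M}_{t,\mu,\mathfrak{D}}\varphi(z)>\lambda\}$. For each $z\in E_\lambda$ some $Q\in\mathfrak{D}$ satisfies $z\in \widehat{K_{c(Q)}}$ and $\mu(\widehat{K_{c(Q)}})^{t-1}\int_{\widehat{K_{c(Q)}}}|\varphi|\,d\mu>\lambda$. The dichotomy property (iii) of the dyadic grid is inherited by the tent family $\{\widehat{K_{c(Q)}}:Q\in\mathfrak{D}\}$ via the bijection $c:\mathfrak{D}\to\mathcal{T}$ and the tree definition $\widehat{K_\alpha}=\bigcup_{\beta\geq\alpha}K_\beta$: if two such tents meet, their centres admit a common descendant in $\mathcal{T}$, hence the centres are comparable and the tents are nested. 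I would therefore perform a Calder\'{o}n--Zygmund stopping-time selection of maximal tents $\{\widehat{K_{c(Q_j)}}\}$ meeting the above threshold; maximality makes them pairwise disjoint, and their union covers $E_\lambda$. Each selected term obeys
\begin{equation*}
\mu(\widehat{K_{c(Q_j)}})<\lambda^{-1/(1-t)}\Big(\int_{\widehat{K_{c(Q_j)}}}|\varphi|\,d\mu\Big)^{1/(1-t)},
\end{equation*}
and since $r:=1/(1-t)\geq 1$ the elementary inequality $\sum_j a_j^r\leq (\sum_j a_j)^r$ (valid for $a_j\geq 0$) combined with the disjointness of the $\widehat{K_{c(Q_j)}}$ produces
\begin{equation*}
\mu(E_\lambda)\leq \sum_j \mu(\widehat{K_{c(Q_j)}})\leq \lambda^{-1/(1-t)}\|\varphi\|_{L^{1}(\mu)}^{1/(1-t)},
\end{equation*}
which is the weak $(1,1/(1-t))$ bound.

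The only slightly delicate ingredient is the nested-or-disjoint dichotomy for the dyadic tents, which reduces to the tree structure of $\mathcal{T}$ as indicated above; everything else is a routine dyadic computation. With both endpoint estimates in hand, Marcinkiewicz interpolation delivers strong $(p,q)$-boundedness for every pair in the stated range.
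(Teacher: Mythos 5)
Your argument is correct, and it follows the same basic skeleton as the paper's proof: a stopping-time selection of maximal dyadic tents above the level $\lambda$, pairwise disjointness of those tents from the tree structure, and Marcinkiewicz interpolation to upgrade weak bounds to strong ones. The difference is in the choice of endpoints. The paper proves the weak-type $(p,q)$ estimate directly at every admissible pair on the line $\frac1p-\frac1q=t$, by applying H\"older's inequality with exponent $p$ on each maximal tent (the exponent $1+tq-q+q/p'$ collapses to $0$ precisely because $t=\frac1p-\frac1q$), and then interpolates between two such weak bounds. You instead prove only the two extreme endpoints, the weak $(1,\frac{1}{1-t})$ bound via the convexity inequality $\sum_j a_j^{1/(1-t)}\leq\bigl(\sum_j a_j\bigr)^{1/(1-t)}$ and the trivial $L^{1/t}\to L^\infty$ bound from H\"older on a single tent, and let interpolation cover the whole open range $1<p<1/t$. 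Your version requires one fewer family of estimates and isolates the $L^1$ computation cleanly; the paper's version gives the weak $(p,q)$ inequality at the target exponents as a byproduct. Your justification of the nested-or-disjoint dichotomy for tents (a common descendant forces the centres to be comparable in the tree) is sound and is in fact slightly more careful than the paper, which only asserts a bounded-overlap property for the maximal tents.
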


\section{Sparse domination}
The inequality has been given in the following lemma has been given in Chapter 2 of \cite{zhu1}. Here, we want to check what relationship the implicit constant has with r and p.
\begin{lemma} \label{l4}
Let $0<r,p<\infty$. Then, there are positive constants $C_r$ and $D_r$ which depend only on $r$ such that
$$|\nabla f (z)|^p\leq C_r^p D_r \int_{B(0,3r)} |f(\zeta)|^p dV (\zeta) , \qquad z\in B(0,r),$$
for all $f\in H(\mathbb{B})$.
\end{lemma}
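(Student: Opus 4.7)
The statement is the quantitative form of a standard Bergman space estimate; my goal is to run the usual Cauchy/mean-value argument carefully enough to separate the $p$-dependence (collected into $C_r^p$) from the $p$-independent part ($D_r$). The crucial geometric fact to exploit is that Bergman balls centered at the origin are honest Euclidean balls, namely $B(0,\rho)=\{z\in\mathbb{B}:|z|<\tanh\rho\}$. Set
\[
\sigma \;=\; \tfrac{1}{4}\bigl(\tanh(3r)-\tanh(r)\bigr),
\]
which depends only on $r$.

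\textbf{Step 1 (Cauchy estimate in each variable).} Fix $z\in B(0,r)$ and $j\in\{1,\dots,n\}$. The slice $\zeta\mapsto f(z+\zeta e_j)$ is holomorphic on the disc $|\zeta|<2\sigma$, since any such point $w=z+\zeta e_j$ satisfies $|w|\le |z|+2\sigma\le \tanh(r)+2\sigma<\tanh(3r)$, hence $w\in B(0,3r)$. The one-variable Cauchy formula on the circle $|\zeta|=\sigma$ yields
\[
\Bigl|\tfrac{\partial f}{\partial z_j}(z)\Bigr|
\;\le\;\sigma^{-1}\sup_{|\zeta|=\sigma}|f(z+\zeta e_j)|,
\]
so that $|\nabla f(z)|\le \sqrt{n}\,\sigma^{-1}\sup_{w\in E_z}|f(w)|$, where $E_z$ is the finite union of one-dimensional circles just described. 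In particular $|\nabla f(z)|^p\le n^{p/2}\sigma^{-p}\sup_{w\in E_z}|f(w)|^p$, which isolates the full $p$-dependent factor.

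\textbf{Step 2 (plurisubharmonic mean value).} For any $w\in E_z$ we have $|w|\le \tanh(r)+\sigma$, so the Euclidean ball $B_E(w,\sigma)$ is contained in $B_E(0,\tanh(r)+2\sigma)\subset B(0,3r)$. Since $|f|^p$ is plurisubharmonic on $\mathbb{B}$ (the composition of the plurisubharmonic function $\log|f|$ with the convex increasing map $t\mapsto e^{pt}$),
\[
|f(w)|^p\;\le\;\frac{1}{V(B_E(w,\sigma))}\int_{B_E(w,\sigma)}|f(\zeta)|^p\,dV(\zeta)
\;\le\;\frac{\kappa_n}{\sigma^{2n}}\int_{B(0,3r)}|f(\zeta)|^p\,dV(\zeta),
\]
where $\kappa_n$ only involves the normalization of $V$ and the volume of the Euclidean unit ball in $\mathbb{C}^n$. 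Taking the supremum over $w\in E_z$ produces a bound that is independent of $p$.

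\textbf{Step 3 (combine).} Multiplying Step 1 and Step 2 gives
\[
|\nabla f(z)|^p\;\le\;\Bigl(\tfrac{\sqrt{n}}{\sigma}\Bigr)^{\!p}\,\frac{\kappa_n}{\sigma^{2n}}\int_{B(0,3r)}|f(\zeta)|^p\,dV(\zeta),
\]
which is exactly the claimed estimate with $C_r:=\sqrt{n}/\sigma$ and $D_r:=\kappa_n/\sigma^{2n}$, both depending only on $r$ (with $n$ fixed throughout the paper). There is no real obstacle; the only point requiring care is the bookkeeping in Step 1 so that the factor of $\sigma^{-1}$ is raised to the power $p$ while the subharmonic mean-value in Step 2 contributes the fixed factor $\sigma^{-2n}$, which is what allows one to write the constant in the form $C_r^p D_r$ rather than a single $r$-and-$p$ dependent constant.
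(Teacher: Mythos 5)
Your proof is correct, and it follows the same two-step skeleton as the paper's — a sup-norm bound for the gradient carrying the factor $C_r^p$, followed by a sub-mean-value estimate for $|f|^p$ carrying the $p$-independent factor $D_r$ — but each step is implemented with a different, more elementary tool. For the gradient bound, the paper invokes the Bergman reproducing formula $f(z)=s^2\int_{|\zeta|<s}f(\zeta)(s^2-\langle z,\zeta\rangle)^{-(n+1)}\,dV(\zeta)$ and differentiates under the integral, whereas you use the one-variable Cauchy estimate on coordinate slices; your route makes the constant $\sqrt{n}/\sigma$ completely explicit and avoids any appeal to the reproducing kernel, at the cost of passing through the enlarged set $E_z$ rather than the Bergman ball $B(0,2r)$ that the paper uses as its intermediate region. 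For the mean-value step, the paper pulls back by the automorphism $\varphi_\zeta$ and uses subharmonicity of $|f\circ\varphi_\zeta|^p$ together with the Jacobian identity, while you apply the sub-mean-value property of the plurisubharmonic function $|f|^p$ directly over Euclidean balls; this sidesteps the change-of-variables computation entirely and exploits the fact (which the paper does not use here) that Bergman balls centered at the origin are Euclidean balls of radius $\tanh\rho$. The net effect is the same inequality with the same structure of constants, obtained with less machinery.
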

\begin{proof}
Let $f\in H(\mathbb{B}_n)$ and $0<s<1$. Then, $f(sz)$ is in the classical Bergman space $A^2$. Thus,
$$f(sz)=\int_\mathbb{B} \dfrac{ f(s\zeta)}{(1-\langle z,\zeta \rangle)^{n+1}} dV(\zeta)$$
By replacing $sz$ with $z$ and the change of variable $s\zeta\mapsto \zeta$, we obtain
$$f(z)=s^2 \int_{|\zeta|<s} \dfrac{ f(\zeta)}{(s^2-\langle z,\zeta \rangle)^{n+1}} dV(\zeta), \qquad |z|<s.$$
Thus, for $0<t<r$, there is a positive constant $C_{s,t}$ so that
$$\Big|\dfrac{\partial f}{\partial z^k} (z)\Big|\leq C_{s,t} \sup\{ |f(\zeta)|; \ |\zeta|<s\}, \qquad |z|\leq t.$$
This estimate shows that there is a positive constant $C_r$ so that
\begin{equation} \label{e31}
|\nabla f (z)|\leq C_r   \sup\{ |f(\zeta)|; \ \zeta\in B(0,2r)\}, \qquad z\in B(0,r),
\end{equation}
Furthermore, from the triangular inequality, we have $B(\zeta,r)\subset B(0,3r)$ for $\zeta\in B(0,2r)$. Hence, form the subharmonicity of $|f\circ \varphi_\zeta|^p$, we have
\begin{align} \label{e49}
|f(\zeta)|^p&=|f\circ \varphi_\zeta (0)|^p\leq \dfrac{1}{V(B(0,r)}\int_{B(0,r)} |f\circ \varphi_\zeta (a)|^p dV(a)\notag\\
&= \dfrac{1}{V(B(0,r)}\int_{B(\zeta,r)} |f (a)|^p \dfrac{(1-|\zeta|^2)^2}{|1-\langle a,\zeta\rangle|^4} dV(a)\\
&\leq D_r \int_{B(0,3r)} |f (a)|^p dV(a).\notag
\end{align}
The desired result is obtained by combining (\ref{e31}) and (\ref{e49}).
\end{proof}

Now, we give the main result of this section.

\begin{theorem} \label{t}
 Let  $0<p,\delta<\infty$ and $k \in \mathbb{N}\bigcup \{0\}$. Then, there are positive constants $r_\delta$, $C_\delta$, and $D_\delta$ which depend only on $\delta$ such that
 \begin{equation} \label{e12}
|R^{(k)} f(z)|^p \leq D_\delta^{k+1} C_\delta^{kp}  \sum_{i=1}^M \sum_{Q\in\mathfrak{D}^i_\delta}  \dfrac{\mathbf{1}_{K_{c(Q)}}(z)}{(1-|c(Q)|)^{kp}} \langle |f|^{p}\rangle_{B(c(Q),2^{k+1}r_\delta)},
\end{equation}
for all $f\in H(\mathbb{B})$ and $z\in \mathbb{B}$. Moreover, if $\omega\in \widehat{\mathcal{D}}$,  then for each  $Q\in\mathfrak{D}_\delta$, we have
\begin{equation} \label{e36}
\langle |f|^p\rangle_{B(c(Q),2^{k+1}r_\delta)} \leq C_\delta \mathcal{M}_{\omega,\mathcal{T}}(|f|^p)(z), \ \ f\in H(\mathbb{B}), \ z\in B(c(Q),2^{k+1}r_\delta).
\end{equation}
\end{theorem}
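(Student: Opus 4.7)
My plan is to prove the two displayed inequalities in sequence. For (\ref{e12}), I would start from a M\"{o}bius-invariant version of Lemma~\ref{l4}: applying Lemma~\ref{l4} at the origin to $f\circ\varphi_a$ and then changing variables via the automorphism $\varphi_a$ (whose Jacobian is comparable to $V(B(a,3r))^{-1}$ throughout $B(a,3r)$) yields the Schwarz--Pick type bound $(1-|a|)^p|\nabla f(a)|^p \lesssim \langle|f|^p\rangle_{B(a,3r)}$ uniformly in $a\in\mathbb{B}$. Since $|Rf(a)|\leq |\nabla f(a)|$, the same bound holds with $R$ in place of $\nabla$. Iterating on $R^{(j)}f$ for $j=k-1,\dots,0$ and invoking the triangle inequality for the Bergman metric (so the Bergman radii add rather than multiply), I obtain
\[
(1-|a|)^{kp}|R^{(k)}f(a)|^p \;\leq\; D_r^{k}C_r^{kp}\,\langle|f|^p\rangle_{B(a,3kr)}.
\]

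To dyadicize, fix $z\in\mathbb{B}$ and, for each of the $M$ grids $\mathfrak{D}_\delta^i$, let $Q_i(z)$ denote the unique cube with $z\in K_{c(Q_i(z))}$. By Lemma~\ref{l7}(a), $\beta(z,c(Q_i(z)))\leq C_2$, so $1-|z|\simeq 1-|c(Q_i(z))|$ and $B(z,3kr)\subseteq B(c(Q_i(z)),3kr+C_2)$. The numerical key is to choose $r_\delta$ once and for all so that $3kr+C_2\leq 2^{k+1}r_\delta$ for every $k\geq 0$, which is possible since the left side grows linearly in $k$ while the right side grows exponentially. With this choice, the bound at $z$ transfers to one centered at $c(Q_i(z))$ on a ball of radius $2^{k+1}r_\delta$, and summing over $i$ and $Q$ (only the cubes $Q_i(z)$ contribute, thanks to the indicator $\mathbf{1}_{K_{c(Q)}}(z)$) gives (\ref{e12}).

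For (\ref{e36}), set $B_*:=B(c(Q),2^{k+1}r_\delta)$. The plan is to exhibit a single dyadic tent $\widehat{K_\beta}$ containing $B_*$ whose $\omega$-weighted $|f|^p$-average dominates the Lebesgue $|f|^p$-average on $B_*$, with a constant depending only on $\delta$. First I would choose a Carleson square $S(\eta)$ with $B_*\subseteq S(\eta)$ and $V(S(\eta))\simeq V(B_*)$ (take $\eta$ on the radial projection of $c(Q)$ at an appropriate modulus); by (\ref{e15}) together with $\omega\in\widehat{\mathcal{D}}$, this forces $V_\omega(S(\eta))\simeq V_\omega(B_*)$. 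Lemma~\ref{l2} then supplies $\beta\in\mathcal{T}$ with $S(\eta)\subseteq\widehat{K_\beta}$ and $V_\omega(\widehat{K_\beta})\simeq V_\omega(S(\eta))$. For any $z\in B_*\subseteq\widehat{K_\beta}$, one has $\mathcal{M}_{\omega,\mathcal{T}}(|f|^p)(z)\geq V_\omega(\widehat{K_\beta})^{-1}\int_{\widehat{K_\beta}}|f|^p\,dV_\omega$, so (\ref{e36}) reduces to the comparison
\[
\langle|f|^p\rangle_{B_*} \;\lesssim\; V_\omega(\widehat{K_\beta})^{-1}\int_{\widehat{K_\beta}}|f|^p\,dV_\omega.
\]

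The main obstacle is this last comparison — passing from a Lebesgue ball-average to an $\omega$-weighted tent-average uniformly in $k$. For regular weights it is immediate from (\ref{e17}), but for general $\omega\in\widehat{\mathcal{D}}$ the weight may concentrate near $|z|=1$ and be small on $B_*$. The plan is to apply the pointwise bound (\ref{e21}) on $B_*$: for each $\zeta\in B_*$, $|f(\zeta)|^p\lesssim V_\omega(S(\zeta))^{-1}\int_{S(\zeta)}|f|^p\,dV_\omega$. Integrating in $\zeta$ over $B_*$, switching the order of integration, and exploiting that $S(\zeta)\subseteq\widehat{K_\beta}$ for $\zeta\in B_*$ (up to enlarging by a $\delta$-dependent number of tree levels, absorbed via the doubling of $\omega$ and Lemma~\ref{l7}(d)) will yield the required inequality with constant depending only on $\delta$. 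Uniformity in $k$ hinges on the fact that $V_\omega(\widehat{K_\beta})\simeq V_\omega(B_*)$ by construction, so no factor of $k$ enters.
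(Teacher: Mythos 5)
Your proposal follows essentially the same route as the paper: you obtain (\ref{e12}) from the gradient/subharmonicity estimate of Lemma \ref{l4} transported by the automorphisms $\varphi_a$, iterated $k$ times with the Bergman radii adding and then localized to the cubes $K_{c(Q)}$ via Lemma \ref{l7}(a), and you obtain (\ref{e36}) from the pointwise bound (\ref{e21}) plus a tent-containment argument, which is exactly the paper's strategy. One caveat: your claims that $V_\omega(S(\eta))\simeq V_\omega(B(c(Q),2^{k+1}r_\delta))$ and $V_\omega(\widehat{K_\beta})\simeq V_\omega(B(c(Q),2^{k+1}r_\delta))$ are false for general $\omega\in\widehat{\mathcal{D}}$ --- the paper explicitly notes that the equivalence $V_\omega(S(z))\simeq V_\omega(B(z,r))$ fails for $\omega\in\mathcal{I}$ --- but this is not load-bearing, since your Fubini argument only needs the one-sided bound $V_\omega(S(\zeta))\gtrsim V_\omega(\widehat{K_\beta})$ for $\zeta$ in the ball, which does follow from (\ref{e15}), the comparability of $1-|\zeta|$ with $1-|c(Q)|$ on that ball, and the doubling of $\widehat{\omega}$.
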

\begin{proof}
For every $1\leq i\leq M$, we know that $\mathbb{B}=\bigcup_{Q\in \mathfrak{D}_\delta^i} K_{c(Q)}$ and the sets $K_{c(Q)}$, $Q\in \mathfrak{D}_\delta^i$, are disjoint. This implies that
$$|R^{(k)} f(z)|^p = \dfrac{1}{M}  \sum_{i=1}^M \sum_{Q\in\mathfrak{D}^i_\delta}  \mathbf{1}_{K_{c(Q)}}(z)  |R^{(k)} f(z)|^p $$

Fix $z\in \mathbb{B}$ and $Q\in \mathfrak{D}_\delta$ such that $z\in K_{c(Q)}$. Let $r_\delta= \sup_{1\leq i\leq M} C_{2,i}$, where $C_{2,i}$ is as in Lemma \ref{l7} (i) for the tree structure induced by $\mathfrak{D}_\delta^i$.
 From the subharmonicity of $|f\circ \varphi_z|^p$, we have
\begin{align*}
|f(z)|^p &\leq \dfrac{1}{ V(B(0,r_\delta))} \int_{B(0,r_\delta)} |f\circ \varphi_z(\zeta)|^p dV(\zeta)\\
&= \dfrac{1}{ V(B(0,r_\delta))} \int_{B(z,r_\delta)} |f(\zeta)|^p |\varphi_z^\prime(\zeta)|^2 dV(\zeta)\\
&\leq \dfrac{L_\delta}{ V(B(z,r_\delta))} \int_{B(z,r_\delta)} |f(\zeta)|^p dV(\zeta)\\
&\leq \dfrac{D_\delta}{ V(B(c(Q),2r_\delta))} \int_{B(c(Q),2r_\delta)} |f(\zeta)|^p dV(\zeta)
\end{align*}
Thus, we obtain (\ref{e12}) for $k=0$.
 From \cite[Lemma 2.14]{zhu1},
$$|R f(z)|\leq \dfrac{1}{1-|z|^2} |\bigtriangledown (f \circ \varphi_z)(0)|.$$
Hence, by Lemma \ref{l4} and the change of variable $\zeta\mapsto \varphi_z(\zeta)$, we have
\begin{equation*}
|R f(z)|^p \leq \dfrac{D_\delta^2 C_\delta^p}{(1-|z|^2)^{p} V(B(z,3r_\delta))} \int_{B(z,3r_\delta)} |f(\zeta)|^p dV(\zeta).
\end{equation*}
Again, since $z\in K_{c(Q)}$, we obtain (\ref{e12}) for $k=1$. Now, (\ref{e12}) follows by induction on $k$.

Now, from (\ref{e21}),
\begin{equation} \label{e13}
\langle |f|^p\rangle_{B(c(Q),2^{k+1}r_\delta)} \leq C \sup_{\zeta\in B(c(Q),2^{k+1}r_\delta)} \mathcal{M}_{\omega,\mathcal{T}}(|f|^p)(\zeta).
\end{equation}
 For every $\zeta\in B(z,2^{k+1}r_\delta)$ and $a_\zeta$ where $\zeta\in S(a_\zeta)$ there is some $a_z$ where $z\in S(a_z)$, $S(a_\zeta)\subseteq S(a_z)$, and $S(a_\zeta)\simeq S(a_z)$. This, with (\ref{e13}) and Lemma \ref{l2}, implies (\ref{e36}).
\end{proof}

\textbf{Notation.} From now on, for $k\in \mathbb{N}\bigcup \{0\}$, we fix  $R_\delta=2^{k+1}r_\delta$ and $B_{c(Q)}=B(c(Q),R_\delta)$ for $Q$ in $\mathfrak{D}_\delta$.

\section{Forward Bergman Carleson measure}

In this section, we  characterize the positive Borel measures $\mu$ on $\mathbb{B}$ so that  $R^{(k)}:A^p_\omega\rightarrow L^q_\mu$ is bounded when $0< p\leq q<\infty$.

Let $\nu$ be a weight on $\mathbb{B}$, $k\in \mathbb{N}\bigcup \{0\}$ and $\delta,s$, and $t$ be positive numbers. We say that a positive Borel measure $\mu$ is in $\mathcal{C}_{\nu,\delta}^{t,ks}$ if
\begin{equation*}
[\mu]_{\nu,\delta}^{t,ks}:= \sup_{Q\in  \mathfrak{D}_\delta} \dfrac{\mu(B_{c(Q)})}{V_\nu(B_{c(Q)})^t (1-|c(Q)|^2)^{ks}}<\infty.
\end{equation*}
Now, we present the main result of this section.

\begin{theorem} \label{t1}
Let $\omega\in \mathcal{D}$, $\nu(r)=\widehat{\omega}(r)/(1-r)$, $0<p\leq q<\infty$, and $k\in \mathbb{N}\bigcup \{0\}$. If $\mu$ is a positive Borel measure,
then the following statements are equivalent.
\begin{itemize}
\item[(i)]  $R^{(k)}:A^p_\omega\rightarrow L^q_\mu$ is bounded,
\item[(ii)] $\mu\in \mathcal{C}_{\nu,\delta}^{\frac{q}{p},kq}$ for some (every) calibre $\delta>0$.
\item[(iii)] For some (every) calibre $\delta>0$ and   $m\in \mathbb{N}$, there are positive numbers $C_\delta$ and $D_\delta$ which depend only on $\delta$ so that
\begin{equation*}
\|R^{(k)} f\|_{\mu,q}^q \leq  D_\delta^{m(k+1)} C_\delta^{kq} [\mu]_{\nu,\delta}^{\frac{q}{p},kq} \inf_{r_j>0, \ r_1+...+r_m=q} \Big( \sum_{i=1}^M \sum_{Q\in\mathfrak{D}^i}  V_\nu(B_{c(Q)})^\frac{q}{p}. \prod_{j=1}^m \langle |f|^{r_j}\rangle_{B_{c(Q)}}\Big),
\end{equation*}
for all $f\in A_\omega^p$.
\end{itemize}
\end{theorem}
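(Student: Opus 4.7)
The plan is to prove the cycle $(ii)\Rightarrow(iii)\Rightarrow(i)\Rightarrow(ii)$. Throughout, Proposition \ref{p3} supplies $\nu\in\mathcal{R}$ and $\|f\|_{\nu,p}\simeq \|f\|_{\omega,p}$ on $H(\mathbb{B})$, so we may interchange the two weights on holomorphic functions.

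For $(ii)\Rightarrow(iii)$, the central input is Theorem \ref{t}. Fix $m\in\mathbb{N}$ and positive $r_1,\ldots,r_m$ with $\sum_j r_j=q$, and apply Theorem \ref{t} with exponent $r_j$ to $|R^{(k)}f|^{r_j}$ for each $j$. For fixed $z\in\mathbb{B}$ and each grid $\mathfrak{D}_\delta^i$, exactly one cube $Q(z,i)\in\mathfrak{D}_\delta^i$ satisfies $z\in K_{c(Q(z,i))}$, and by Lemma \ref{l7}(a) the $M$ centres $c(Q(z,i))$ all lie within bounded Bergman distance of each other; hence the balls $B_{c(Q(z,i))}$ and the factors $1-|c(Q(z,i))|$ are pairwise comparable across $i$. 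Multiplying the $m$ bounds and absorbing the resulting $M^m$ combinatorial factor into $D_\delta^{m(k+1)}$, I would obtain the pointwise sparse estimate
\begin{equation*}
|R^{(k)}f(z)|^q \leq D_\delta^{m(k+1)} C_\delta^{kq}\sum_{i=1}^M\sum_{Q\in\mathfrak{D}_\delta^i}\frac{\mathbf{1}_{K_{c(Q)}}(z)}{(1-|c(Q)|)^{kq}}\prod_{j=1}^m \langle |f|^{r_j}\rangle_{B_{c(Q)}}.
\end{equation*}
Integrating against $d\mu$, using $K_{c(Q)}\subset B_{c(Q)}$, and applying $(ii)$ to replace $\mu(K_{c(Q)})/(1-|c(Q)|)^{kq}$ by $[\mu]_{\nu,\delta}^{q/p,kq}\,V_\nu(B_{c(Q)})^{q/p}$ yields $(iii)$.

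For $(iii)\Rightarrow(i)$, I would take $m=\lceil q/p\rceil$ and $r_j=q/m\leq p$. Jensen's inequality on the flat averages then gives $\prod_j\langle |f|^{r_j}\rangle_{B_{c(Q)}}\leq \langle |f|^p\rangle_{B_{c(Q)}}^{q/p}$. Since $\nu\in\mathcal{R}$, one has $V_\nu(B_{c(Q)})/V(B_{c(Q)})\simeq \nu(\zeta)$ uniformly for $\zeta\in B_{c(Q)}$, so $V_\nu(B_{c(Q)})\langle |f|^p\rangle_{B_{c(Q)}}\simeq \int_{B_{c(Q)}}|f|^p\,dV_\nu$. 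The elementary inequality $\sum_Q a_Q^{q/p}\leq (\sum_Q a_Q)^{q/p}$, valid since $q/p\geq 1$ and $a_Q\geq 0$, combined with the finite overlap from Lemma \ref{l7}(b) and the norm equivalence from Proposition \ref{p3}, delivers $\|R^{(k)}f\|_{\mu,q}^q\lesssim \|f\|_{\omega,p}^q$.

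For $(i)\Rightarrow(ii)$, I would run a standard test-function argument. For each $Q\in\mathfrak{D}_\delta$, take a normalized reproducing-kernel-type test function $f_Q(z)= c_Q (1-\langle z,c(Q)\rangle)^{-\beta}$, with $\beta$ large enough and $c_Q$ chosen via the integral estimate \eqref{e26} together with Proposition \ref{p3} so that $\|f_Q\|_{\omega,p}\simeq 1$. Using \eqref{e24} and the homogeneity of the radial derivative, one obtains $|R^{(k)}f_Q(z)|\gtrsim (1-|c(Q)|)^{-k}V_\nu(B_{c(Q)})^{-1/p}$ on $B_{c(Q)}$, and boundedness of $R^{(k)}:A_\omega^p\to L^q_\mu$ then forces $\mu(B_{c(Q)})\lesssim (1-|c(Q)|)^{kq}V_\nu(B_{c(Q)})^{q/p}$, which is $(ii)$. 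The step I expect to be the main obstacle is the pointwise estimate in $(ii)\Rightarrow(iii)$: the $m$-fold product of sparse sums is \emph{a priori} a sum over $M^m$ tuples of cubes drawn from different grids, and collapsing it into a single-cube sparse sum in which every factor $\langle |f|^{r_j}\rangle$ is evaluated over the \emph{same} ball $B_{c(Q)}$ requires careful exploitation of the cross-grid comparability of the minimal cubes containing a given point $z$.
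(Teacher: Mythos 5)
Your proposal is correct and proves the same cycle of implications, but two of the three steps diverge from the paper in ways worth recording. For (i)$\Rightarrow$(ii) you and the paper do the same thing: test against normalized kernels $(1-\langle\cdot,c(Q)\rangle)^{-\beta}$ and invoke \eqref{e24}, \eqref{e26}, \eqref{e19}. For (ii)$\Rightarrow$(iii) the paper does not multiply $m$ pointwise sparse bounds; it peels off one factor at a time \emph{inside} the integral, writing $\|R^{(k)}f\|_{\mu,q}^q=\int|R^{(k)}f|^{q-r_1}|R^{(k)}f|^{r_1}d\mu$, applying \eqref{e12} to the $r_1$-power, and then re-applying the estimate to $|R^{(k)}f|^{r_2}$ on each piece $K_{c(Q)}$ with the \emph{same} cube $Q$; this keeps every factor $\langle|f|^{r_j}\rangle$ anchored to the single ball $B_{c(Q)}$ and entirely avoids the $M^m$-tuple/cross-grid collapse that you correctly flag as the delicate point of your route. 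Your collapse does work (the unique cubes of the $M$ grids containing a given $z$ lie at the same tree level, so their centres are within bounded Bergman distance and the averages over their balls are mutually comparable by \eqref{e23}), but it only returns averages over a fixed dilate of $B_{c(Q)}$, so you get (iii) with enlarged balls and constants; the cleaner fix is to note that the proof of Theorem \ref{t} actually gives the single-cube estimate $|R^{(k)}f(z)|^{r}\lesssim(1-|c(Q)|)^{-kr}\langle|f|^{r}\rangle_{B_{c(Q)}}$ for the one $Q\in\mathfrak{D}^i_\delta$ with $z\in K_{c(Q)}$, and to multiply those $m$ bounds for the \emph{same} $(i,Q)$ — which is what the paper's iteration amounts to. For (iii)$\Rightarrow$(i) your argument is genuinely different from, and simpler than, the paper's: you choose $m\geq q/p$ so that each $r_j=q/m\leq p$, use Jensen to get $\prod_j\langle|f|^{r_j}\rangle_{B_{c(Q)}}\leq\langle|f|^p\rangle_{B_{c(Q)}}^{q/p}$, regularity of $\nu$ to convert $V_\nu(B_{c(Q)})^{q/p}\langle|f|^p\rangle_{B_{c(Q)}}^{q/p}\simeq(\int_{B_{c(Q)}}|f|^p\,dV_\nu)^{q/p}$, and then the $\ell^1\hookrightarrow\ell^{p/q}$ inequality plus the finite overlap of Lemma \ref{l7}(b); this bypasses the fractional maximal operator $\mathcal{M}_{t,\mu,\mathfrak{D}}$ and Lemma \ref{l5} on which the paper's proof rests. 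The only cost is that your argument needs the freedom to take $m$ large (it does not recover (i) from (iii) stated for a single small $m$ such as $m=1$ with $r_1=q>p$), but since (iii) is asserted "for some (every) $m$" and the paper's own proof likewise fixes its parameters, this matches the intended reading; your route is a legitimate and arguably preferable alternative.
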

\begin{proof}
According to Proposition \ref{p3} and without loss of generality, we can assume $\omega(r)=\nu(r):=\widehat{\omega}(r)/(1-r)$ and hence $\omega\in \mathcal{R}$.

(i) $\Rightarrow$ (ii). For every $Q\in \mathfrak{D}_\delta$, it follows from (\ref{e24}), (i), (\ref{e26}), and (\ref{e19})  that there exists a $\gamma>0$ such that
\begin{align*}
\mu(B_{c(Q)})&\simeq (1-|c(Q)|^2)^{(k+\gamma)q} \int_{B_{c(Q)}} \Big|R^{(k)}\Big(\dfrac{1}{1-\overline{c(Q)}\zeta}\Big)^\gamma\Big|^q d\mu(\zeta)\notag\\
&\leq  (1-|c(Q)|^2)^{(k+\gamma)q} \int_{\mathbb{B}} \Big|R^{(k)}\Big(\dfrac{1}{1-\overline{c(Q)}\zeta}\Big)^\gamma\Big|^q d\mu(\zeta)\notag\\
&\lesssim  (1-|c(Q)|^2)^{(k+\gamma)q} \Big( \int_{\mathbb{B}} \dfrac{1}{|1-\overline{c(Q)}\zeta|^{p\gamma}} dV_\omega(\zeta)\Big)^\frac{q}{p}\notag\\
&\lesssim \dfrac{(1-|c(Q)|^2)^{(k+\gamma)q} \omega(c(Q))^\frac{q}{p}}{(1-|c(Q)|^2)^{(p\gamma-2)q/p}}\\
&\simeq V_\omega(B_{c(Q)})^\frac{q}{p} (1-|c(Q)|^2)^{kq}.
\end{align*}
Thus, $\mu$ is in $\mathcal{C}_{\nu,\delta}^{\frac{q}{p},kq}$.

(ii) $\Rightarrow$ (iii). Fix $\delta>0$ and $m\in \mathbb{N}$.
Let $f\in A_\omega^q$ and  $r_j>0$, where $r_1+...+r_m=q$.
Using  (\ref{e12}) $m$ times, there are positive numbers $C_\delta$ and $D_\delta$ such that
\begin{align*}
\|R^{(k)}  f \|_{q,\mu}^q&= \int_{\mathbb{B}}   |R^{(k)} f (\zeta)|^q  d\mu(\zeta)= \int_{\mathbb{B}}   |R^{(k)} f (\zeta)|^{q-r_1} |R^{(k)} f (\zeta)|^{r_1} d\mu(\zeta)\notag\\
&\leq D_\delta^{k+1} C_\delta^{kr_1} \int_{\mathbb{B}}  |R^{(k)}f(\zeta)|^{q-r_1}  \Big[ \sum_{i=1}^M \sum_{Q\in\mathfrak{D}^i_\delta} \dfrac{\mathbf{1}_{K_{c(Q)}}(\zeta)}{(1-|c(Q)|)^{kr_1}} \langle |f|^{r_1}\rangle_{B_{c(Q)}} \Big] d\mu(\zeta)\\
&=  D_\delta^{k+1} C_\delta^{kr_1} \sum_{i=1}^M \sum_{Q\in\mathfrak{D}^i_\delta} \dfrac{\langle |f|^{r_1}\rangle_{B_{c(Q)}}}{(1-|c(Q)|)^{kr_1}} \int_{K_{c(Q)}}  |R^{(k)}f(\zeta)|^{q-r_1} d\mu(\zeta)\\
&\leq D_\delta^{2(k+1)} C_\delta^{k(r_1+r_2)} \sum_{i=1}^M \sum_{Q\in\mathfrak{D}^i_\delta} \dfrac{\langle |f|^{r_1}\rangle_{B_{c(Q)}} \langle |f|^{r_2}\rangle_{B_{c(Q)}}}{(1-|c(Q)|)^{k(r_1+r_2)}} \int_{K_{c(Q)}}  |R^{(k)}f(\zeta)|^{q-(r_1+r_2)} d\mu(\zeta)\\
&\leq D_\delta^{m(k+1)} C_\delta^{kq}   \sum_{i=1}^M \sum_{Q\in\mathfrak{D}^i_\delta} \dfrac{\mu(K_{c(Q)})}{(1-|c(Q)|)^{kq}} \prod_{j=1}^m \langle |f|^{r_j}\rangle_{B_{c(Q)}} .
\end{align*}
Now, (iii) follows from (ii).

(iii) $\Rightarrow$ (i).
 Let $f\in A^p_\omega$, then there is some $i_0\in\{1,...,M\}$ such that
\begin{align} \label{e43}
\sum_{i=1}^M \sum_{Q\in\mathfrak{D}^i_\delta} V_\omega(B_{c(Q)})^\frac{q}{p}. \prod_{j=1}^m \langle |f|^{r_j}\rangle_{B_{c(Q)}}
&\leq M \sum_{Q\in\mathfrak{D}^{i_0}_\delta} V_\omega(B_{c(Q)})^\frac{q}{p} . \prod_{j=1}^m \langle |f|^{r_j}\rangle_{B_{c(Q)}}.
\end{align}
Take $\alpha>0$ such that
$$\alpha p>1, \qquad \frac{\frac{q}{p}-1}{\alpha}<1.$$
Then, from (iii), (\ref{e43}), (\ref{e36}), and Lemma \ref{l5}, we have
\begin{align}
\|R^{(k)}f\|_{\mu,q}^q &\lesssim \sum_{Q\in\mathfrak{D}^{i_0}_\delta} V_\omega(B_{c(Q)})^\frac{q}{p} . \prod_{j=1}^m \langle |f|^{r_j}\rangle_{B_{c(Q)}}\notag\\
 &\lesssim \sum_{Q\in\mathfrak{D}^{i_0}_\delta} V_\omega(B_{c(Q)}) .\prod_{j=1}^m  \mathcal{M}_{\frac{\frac{q}{p}-1}{\alpha r_j} ,V_\omega,\mathfrak{D}_\delta}(|f|^\frac{1}{\alpha})(c(Q))^{\alpha r_j}\notag\\
  &= \sum_{Q\in\mathfrak{D}^{i_0}_\delta} V_\omega(B_{c(Q)})  \mathcal{M}_{\frac{\frac{q}{p}-1}{\alpha} ,V_\omega,\mathfrak{D}_\delta}(|f|^\frac{1}{\alpha})(c(Q))^{\alpha}. \label{e14}
\end{align}
Since $\{B_{c(Q)}\}_{Q\in \mathfrak{D}^{i_0}_\delta}$ has a finite overlap (\ref{e36}) implies that
\begin{align*}
(\ref{e14})&\lesssim \int_\mathbb{B}  \mathcal{M}_{\frac{\frac{q}{p}-1}{\alpha},V_\omega,\mathfrak{D}_\delta}(|f|^\frac{1}{\alpha})(z)^{\alpha q} dV_\omega(z) \\
&\lesssim \Big( \int_\mathbb{B} |f(z)|^{\frac{1}{\alpha}.\alpha p}  dV_\omega(z)  \Big)^\frac{\alpha q}{\alpha p}=\|f\|_{p,\omega}^q.
\end{align*}
The proof is complete.
\end{proof}

\begin{remark} \label{r1}
Let $\omega\in \widehat{\mathcal{D}}$, $t\in \mathbb{R}$, and $\delta>0$. Define
$$M_{t,\omega,\mathfrak{D}_\delta}(\varphi)(z):= (1-|z|^2)^t \mathcal{M}_{\omega,\mathfrak{D}_\delta}(\varphi)(z), \ z\in \mathbb{B}.$$
and
$$M_{t,\omega}(\varphi)(z):= (1-|z|^2)^t \mathcal{M}_{\omega}(\varphi)(z), \ z\in \mathbb{B}.$$
By Theorem \ref{t}, for every $s>0$, we have
$$|R^{(k)}f(z)|^s\lesssim M_{-ks,\omega,\mathfrak{D}_\delta}(|f|^s)(z).$$
 Thus, similar the proof of \cite[Theorem 2.1]{pelaez1}, we can obtain the following result. We just need to substitute $M_{-kq,\omega}$ with  $\mathcal{M}_{\omega}$ in its proof.
The equivalence of (\ref{e11}) and (\ref{e32}) comes from Lemma \ref{l2}.

\begin{itemize}
\item Let $\omega\in \widehat{\mathcal{D}}$, $0< p\leq q<\infty$, and $\mu$ be a positive Borel measure on $\mathbb{B}$. Then,
 $R^{(k)}:A^p_\omega\rightarrow L^q_\mu$ is bounded if and only if for some (every) calibre $\delta>0$,
\begin{equation}\label{e11}
\sup_{Q\in \mathfrak{D}_\delta} \dfrac{\mu(\widehat{K_{c(Q)}})}{V_\omega(\widehat{K_{c(Q)}})^{\frac{q}{p}}(1-|c(Q)|^2)^{kq}}<\infty,
\end{equation}
if and only if
\begin{equation} \label{e32}
[\mu]_{\omega,\frac{q}{p}}:=\sup_{a\in \mathbb{B}} \dfrac{\mu(S(a))}{V_\omega(S(a))^{\frac{q}{p}}(1-|a|^2)^{kq}}<\infty.
\end{equation}
\end{itemize}
\end{remark}

\begin{corollary} \label{c1}
Let $\mu$ be a positive Borel measure on $\mathbb{B}$, $\omega\in \widehat{\mathcal{D}}$, $0<p
\leq q<\infty$, and $\alpha>0$ be such that $\alpha q>1$.
Then,  $R^{(k)}:A^p_\omega\rightarrow L^q_\mu$ is bounded  if and only if for some (every) calibre $\delta>0$, $M_{-\frac{kq}{\alpha},\omega,\mathfrak{D}_\delta}((.)^\frac{1}{\alpha})^\alpha: L_\omega^p\rightarrow L^q_\mu$ is bounded. Moreover, for every $\delta>0$,
$$\|M_{-\frac{kq}{\alpha},\omega,\mathfrak{D}_\delta}((.)^\frac{1}{\alpha})^\alpha: L_\omega^p\rightarrow L^q_\mu\| \simeq \sup_{Q\in \mathfrak{D}_\delta} \dfrac{\mu(\widehat{K_{c(Q)}})}{V_\omega(\widehat{K_{c(Q)}})^{\frac{q}{p}}(1-|c(Q)|^2)^{kq}}.$$
\end{corollary}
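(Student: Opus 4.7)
The plan is to invoke Remark \ref{r1}, which equates boundedness of $R^{(k)}:A^p_\omega\to L^q_\mu$ with finiteness of the Carleson quantity
\[
C_0:=\sup_{Q\in\mathfrak{D}_\delta}\frac{\mu(\widehat{K_{c(Q)}})}{V_\omega(\widehat{K_{c(Q)}})^{q/p}(1-|c(Q)|^2)^{kq}},
\]
and to establish separately the analogous equivalence $\|M\|^q\simeq C_0$ for the maximal operator $M:=M_{-kq/\alpha,\omega,\mathfrak{D}_\delta}((\cdot)^{1/\alpha})^\alpha:L^p_\omega\to L^q_\mu$; the corollary then follows by transitivity.

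For the direction $C_0\lesssim\|M\|^q$, a test-function argument suffices. Fix $Q_0\in\mathfrak{D}_\delta$ and take $f=\mathbf{1}_{\widehat{K_{c(Q_0)}}}$, so that $f^{1/\alpha}=f$ and $\|f\|_{L^p_\omega}^q=V_\omega(\widehat{K_{c(Q_0)}})^{q/p}$. For $z\in\widehat{K_{c(Q_0)}}$ the tent $\widehat{K_{c(Q_0)}}$ itself competes in the supremum defining $\mathcal{M}_{\omega,\mathfrak{D}_\delta}(f^{1/\alpha})(z)$, producing the pointwise bound $\mathcal{M}_{\omega,\mathfrak{D}_\delta}(f^{1/\alpha})(z)\geq 1$ there. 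Combining this with $(1-|z|^2)^{-kq}\gtrsim(1-|c(Q_0)|^2)^{-kq}$ on the tent (since $|z|\geq|c(Q_0)|$), integrating against $d\mu$, dividing by $\|f\|_{L^p_\omega}^q$ and taking the supremum over $Q_0$ yields $C_0\lesssim\|M\|^q$.

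The reverse direction $\|M\|^q\lesssim C_0$ is the main work. Substituting $h=f^{1/\alpha}$ and introducing the auxiliary measure $d\sigma(z):=(1-|z|^2)^{-kq}d\mu(z)$ reduces the goal to
\[
\int_\mathbb{B}\mathcal{M}_{\omega,\mathfrak{D}_\delta}(h)^{\alpha q}\,d\sigma\lesssim C_0\,\|h\|_{L^{\alpha p}(V_\omega)}^{\alpha q}.
\]
The first step is to transfer the Carleson condition from $\mu$ to $\sigma$: decomposing $\widehat{K_{c(Q)}}=\bigsqcup_{\beta\geq c(Q)}K_\beta$ and using $(1-|z|^2)\simeq(1-|\beta|^2)$ on $K_\beta$, the bound $\mu(\widehat{K_\beta})\leq C_0V_\omega(\widehat{K_\beta})^{q/p}(1-|\beta|^2)^{kq}$ together with the doubling identity $V_\omega(K_\beta)\simeq V_\omega(\widehat{K_\beta})$ available for $\omega\in\widehat{\mathcal{D}}$ forces a strict geometric contraction of $V_\omega(\widehat K_\cdot)^{q/p}$ from a node to its children (using $q/p\geq 1$). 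Summing the resulting geometric series along the subtree rooted at $c(Q)$ gives $\sigma(\widehat{K_{c(Q)}})\lesssim C_0\,V_\omega(\widehat{K_{c(Q)}})^{q/p}$. With this dyadic Carleson condition for $\sigma$ with respect to $V_\omega$ in hand, the required integral inequality is a standard weighted-maximal Carleson embedding on the Bergman dyadic tree; I would extract it from Lemma \ref{l5} after, if necessary, enlarging $\alpha$ to some $\alpha'\geq\alpha$ with $\alpha'p>1$ (so that the endpoints lie in the admissible range $1<P\leq Q<\infty$, $0\leq 1/P-1/Q<1$ of that lemma) and transferring back via the $\alpha$-independent constant $C_0$.

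The principal obstacle I anticipate is precisely this Carleson transfer from $\mu$ to $\sigma$, in particular the uniform extraction of a geometric tree-contraction of $V_\omega(\widehat{K_\cdot})^{q/p}$ purely from $\omega\in\widehat{\mathcal{D}}$, without the stronger regularity of $\omega\in\mathcal{R}$ that makes the analogous step transparent. Once that is established, the test-function direction and the remaining weighted Carleson embedding are routine.
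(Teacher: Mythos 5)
Your overall architecture (a test-function argument for the lower bound, a Carleson embedding for the dyadic maximal operator for the upper bound) is the right one and matches the route the paper leaves implicit: Corollary \ref{c1} is stated without proof, as a byproduct of Remark \ref{r1} and of the argument for Theorem \ref{t1} (iii)$\Rightarrow$(i). Two preliminary remarks on the statement. First, as literally written the operator $M_{-kq/\alpha,\omega,\mathfrak{D}_\delta}((\cdot)^{1/\alpha})^\alpha$ carries the factor $(1-|z|^2)^{-kq}$ \emph{before} the $q$-th power in the $L^q_\mu$-norm is taken, so that norm involves $(1-|z|^2)^{-kq^2}$ and cannot be comparable to the displayed Carleson quantity; your computation tacitly works with $M_{-k/\alpha,\omega,\mathfrak{D}_\delta}((\cdot)^{1/\alpha})^\alpha$, which is the operator actually supported by the pointwise bound $|R^{(k)}f|^{1/\alpha}\lesssim M_{-k/\alpha,\omega,\mathfrak{D}_\delta}(|f|^{1/\alpha})$ of Remark \ref{r1}, and your test-function step is correct for that version. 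Second, Lemma \ref{l5} is a one-measure bound ($L^p_\mu\to L^q_\mu$, with the normalization $\mu(\widehat{K})^t$ built into the operator); the embedding you need, namely $\int_{\mathbb{B}}\mathcal{M}_{\omega,\mathfrak{D}_\delta}(h)^{\alpha q}\,d\sigma\lesssim\|h\|^{\alpha q}_{L^{\alpha p}(V_\omega)}$ under $\sigma(\widehat{K})\lesssim V_\omega(\widehat{K})^{q/p}$, is the two-measure variant used without separate statement in the proof of Theorem \ref{t1}; routine, but not literally Lemma \ref{l5}.

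The genuine gap is exactly where you anticipated it, and it is fatal in part of the stated range. The ``doubling identity $V_\omega(K_\beta)\simeq V_\omega(\widehat{K_\beta})$'' is \emph{not} available for $\omega\in\widehat{\mathcal{D}}$: Lemma \ref{l7}(d) compares the tent $\widehat{K_\alpha}$ with the Carleson box $S(\alpha)$, not with the single cube $K_\alpha$, and comparability of a cube with its tent is essentially the reverse-doubling condition $\omega\in\widecheck{\mathcal{D}}$. For $\widehat{\omega}(r)=(\log\frac{e}{1-r})^{-2}$, which lies in $\widehat{\mathcal{D}}\setminus\widecheck{\mathcal{D}}$, one has $V_\omega(K_\beta)\simeq V_\omega(\widehat{K_\beta})/\log\frac{e}{1-|\beta|}$, so there is no geometric contraction from a node to its children when $q=p$; indeed $\sum_{\beta\geq c(Q)}V_\omega(\widehat{K_\beta})\simeq V_\omega(\widehat{K_{c(Q)}})\log\frac{e}{1-|c(Q)|}$, which is not $\lesssim V_\omega(\widehat{K_{c(Q)}})$. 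Worse, your intermediate claim itself fails there: distributing mass so that $\mu(\widehat{K_\beta})\simeq(1-|\beta|)^{kq}V_\omega(\widehat{K_\beta})$ at every node of the subtree yields $\sigma(\widehat{K_{c(Q)}})\simeq V_\omega(\widehat{K_{c(Q)}})\log\frac{e}{1-|c(Q)|}$, so no argument can recover $\sigma(\widehat{K})\lesssim C_0\,V_\omega(\widehat{K})^{q/p}$ for $p=q$, $k\geq1$ under $\widehat{\mathcal{D}}$ alone. Your proof does go through when $k=0$ (the transfer is vacuous) and when $q>p$ strictly (the exponent $q/p-1>0$, the disjointness of the children's tents, and the monotonicity of $\widehat{\omega}$ together supply geometric decay across generations), but the case $p=q$, $k\geq1$ --- which the corollary asserts for all $\omega\in\widehat{\mathcal{D}}$ --- is not covered; it would require either restricting to $\omega\in\mathcal{D}$ or an argument that does not pass through a Carleson condition for $\sigma$.
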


\section{Vanishing Bergman Carleson measure}

Now, we address the compactness of $R^{(k)}$. The following lemma eliminates the need for a weak topology in the study of compact operators from $A^p_\omega$ to $L_\mu^q$.

\begin{lemma}\cite[Lemma 4]{du1} \label{l10}
Let $0<p,q<\infty$, $\omega\in \widehat{\mathcal{D}}$, and $\mu$ be a positive Borel measure on $\mathbb{B}$. If $T:A^p_\omega\rightarrow L_\mu^q$ is linear and bounded, then $T$ is compact if and only if whenever $\{f_k\}$ is bounded in $A^p_\omega$ and $f_k\rightarrow 0$ uniformly on the compact subsets of $\mathbb{B}$, $\lim_{k\rightarrow \infty} \|T f_k\|_{L_\mu^q}=0$.
\end{lemma}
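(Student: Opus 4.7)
The proof splits into the two standard implications, with Montel's theorem and Fatou's lemma providing the backbone.

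For $(\Leftarrow)$ I would argue as follows. Start with any bounded sequence $\{f_k\}\subset A^p_\omega$. The local estimate $|f(z)|^p\lesssim V_\omega(B(z,r))^{-1}\int_{B(z,r)}|f|^p\,dV_\omega$, of the same submean-value type already used in the proof of Theorem \ref{t}, gives a uniform bound for $|f_k|$ on every compact subset of $\mathbb B$, so by Montel's theorem a subsequence $f_{k_j}$ converges uniformly on compacta to some $f\in H(\mathbb B)$; Fatou's lemma places $f$ in $A^p_\omega$. The difference $\{f_{k_j}-f\}$ is bounded in $A^p_\omega$ (the $p$-triangle inequality is used when $p<1$) and tends to $0$ uniformly on compacta, so the hypothesis yields $\|T(f_{k_j}-f)\|_{L^q_\mu}\to 0$, i.e.\ $Tf_{k_j}\to Tf$ in $L^q_\mu$. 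Hence $T$ carries bounded sets to relatively sequentially compact sets and is therefore compact.

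For $(\Rightarrow)$ I would assume $T$ compact and a bounded $\{f_k\}\subset A^p_\omega$ with $f_k\to 0$ uniformly on compacta, aiming to show $\|Tf_k\|_{L^q_\mu}\to 0$ by a subsequence-of-subsequence argument. Given any subsequence $\{f_{k_j}\}$, compactness of $T$ produces a further subsequence with $Tf_{k_{j_l}}\to g$ in $L^q_\mu$, and by extracting once more the convergence may be assumed to hold $\mu$-a.e. The whole problem reduces to identifying $g=0$. For $p>1$ this is immediate: the reproducing kernels $K_z$ sit in the predual, the u.c.c.\ convergence reads as $\langle f_k,K_z\rangle_\omega\to 0$ for every $z$, density of $\{K_z\}$ in the predual gives weak convergence of $\{f_k\}$, and a compact operator maps weakly convergent sequences to norm-convergent ones, forcing $g=0$. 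In the quasi-Banach range $0<p\leq 1$ I would replace the duality step by density of polynomials in $A^p_\omega$ (available because $\omega\in\widehat{\mathcal{D}}$) together with Cauchy-type pointwise estimates: each $f_{k_{j_l}}$ is split as a Taylor partial sum (whose $T$-image is controlled on compacta via u.c.c.\ convergence and boundedness of $T$) plus an $A^p_\omega$-small tail, and pairing this decomposition with the $\mu$-a.e.\ convergence $Tf_{k_{j_l}}\to g$ squeezes $g$ to zero.

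The main obstacle is precisely this identification of $g$ in the case $p\leq 1$ (or $q\leq 1$): the absence of a useful duality forces one to abandon the clean weak-convergence-plus-compactness shortcut and to work instead with $\mu$-a.e.\ pointwise arguments built from Cauchy/subharmonic estimates and polynomial density, all of which must be shown to be compatible with the given doubling weight $\omega\in\widehat{\mathcal{D}}$. Once this step is handled in both the Banach and quasi-Banach cases, the two implications combine to give the stated equivalence.
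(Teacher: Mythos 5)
The paper offers no proof of this statement at all: it is imported verbatim as \cite[Lemma 4]{du1}, so your attempt can only be judged on its own merits. Your sufficiency direction ($\Leftarrow$) is correct and is the standard argument: the submean-value estimate gives uniform boundedness on compacta, Montel and Fatou produce a u.c.c.\ limit $f\in A^p_\omega$, and the hypothesis applied to the bounded, u.c.c.-null sequence $f_{k_j}-f$ gives relative compactness of the image of a bounded set. Your necessity direction is essentially fine for $1<p<\infty$ and $q\ge 1$, though the cleaner route is reflexivity of $A^p_\omega$ plus boundedness of point evaluations (which identifies every weak subsequential limit with the u.c.c.\ limit $0$) rather than density of reproducing kernels in the predual, which for a general $\omega\in\widehat{\mathcal{D}}$ is itself a nontrivial duality statement; note also that if $q<1$ the target $L^q_\mu$ may have (almost) trivial dual, so even the step ``compact operators send weakly null sequences to norm-null sequences'' needs care there.

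The genuine gap is exactly where you place it, but the repair you sketch for $0<p\le 1$ does not close it. You propose writing each $f_{k_{j_l}}$ as a Taylor partial sum plus an $A^p_\omega$-small tail. Partial-sum operators are not uniformly bounded on $A^p_\omega$ in the quasi-Banach range, so even single-function norm convergence of $S_Nf$ to $f$ is problematic; and, more fatally, the tail cannot be made small \emph{uniformly over the sequence}, which is what the squeeze requires. For instance, with $f_k=z_1^k/\|z_1^k\|_{\omega,p}$ and the dilations $(f_k)_r(z)=f_k(rz)$ (for which single-function approximation does hold), one has $\|f_k-(f_k)_r\|_{\omega,p}=1-r^k\to 1$ for each fixed $r<1$; uniform approximability by polynomials of bounded degree would force relative compactness of $\{f_k\}$ in $A^p_\omega$, which fails. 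The standard way out---and the only way the lemma is used in this paper---exploits the pointwise nature of the operator: for $T=R^{(k)}$ followed by the inclusion into $L^q_\mu$, Cauchy's estimates give $Tf_{k_{j_l}}(z)\to 0$ for every $z\in\mathbb{B}$, and extracting a $\mu$-a.e.\ convergent subsequence from the norm-convergent sequence $Tf_{k_{j_l}}\to g$ forces $g=0$ $\mu$-a.e., with no duality and no restriction on $p$ or $q$. For a truly arbitrary bounded linear $T$ in the quasi-Banach range the identification of $g$ is a real issue, and you should either restrict the statement to operators with such a pointwise action or defer to the argument in \cite[Lemma 4]{du1} rather than the partial-sum decomposition.
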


Now, we define a subclass of $\mathcal{C}_{\nu,\delta}^{t,ks}$. The class of measures $\mathcal{V}_{\nu,\delta}^{t,ks}$ contains the positive Borel measures $\mu$ on $\mathbb{B}$ such that
\begin{equation*}
\lim_{Q\in  \mathfrak{D}_\delta, |c(Q)|\rightarrow 1} \dfrac{\mu(B_{c(Q)})}{V_\nu(B_{c(Q)})^\frac{q}{p} (1-|c(Q)|^2)^{kq}}=0.
\end{equation*}

\begin{theorem} \label{t2}
Let $\omega\in \mathcal{D}$, $\nu(r)=\widehat{\omega}(r)/(1-r)$, and $0<p\leq q<\infty$. If $\mu$ is a positive Borel measure such that $R^{(k)}:A^p_\omega\rightarrow L^q_\mu$ is bounded,
then the following statements are equivalent.
\begin{itemize}
\item[(i)]  $R^{(k)}:A^p_\omega\rightarrow L^q_\mu$ is compact.
\item[(ii)] $\mu \in \mathcal{V}_{\nu,\delta}^{t,s}$, for some (every) calibre $\delta>0$.
\item[(iii)]  Let $m\in \mathbb{N}$, $r_1+...+r_m=q$, where $0<r_j\leq q$. Then, for any bounded set $\{f_l\}_{l\geq 1} \subset A^p_\omega$ with $f_l \rightarrow 0$, uniformly on the compact subsets of $\mathbb{B}$, as $l \rightarrow \infty$,
\begin{align*}
&\lim_{s\rightarrow 1^-} \sup_{l\geq 1}  \Big( \sum_{i=1}^M \sum_{Q\in\mathfrak{D}^i_\delta, |c(Q)|>s}  \dfrac{\mu(K_{c(Q)})}{(1-|c(Q)|^2)^{kq}} .  \prod_{j=1}^m \langle |f_l|^{r_j}\rangle_{B_{c(Q)}} \Big)=0.
\end{align*}
\end{itemize}
\end{theorem}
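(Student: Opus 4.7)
As in Theorem \ref{t1}, Proposition \ref{p3} lets me reduce to the case $\omega\in\mathcal{R}$ with $\omega=\nu$. I will prove the cycle (i)$\Rightarrow$(ii)$\Rightarrow$(iii)$\Rightarrow$(i), combining the sparse bound of Theorem \ref{t} with the test-sequence characterization of compactness in Lemma \ref{l10}. The essential new ingredient compared with Theorem \ref{t1} is a split at a radius $s<1$ that promotes each pointwise estimate to a ``smallness near the boundary'' statement.

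For (i)$\Rightarrow$(ii), I would use normalized versions of the reproducing-kernel test functions from (i)$\Rightarrow$(ii) of Theorem \ref{t1}. Fix $\gamma$ large enough that (\ref{e26}) applies with $t=p\gamma$, and set
\[
F_a(z)=\frac{1}{\|k_a\|_{\omega,p}}\,k_a(z),\qquad k_a(z)=(1-\langle z,a\rangle)^{-\gamma}.
\]
Then $\|F_a\|_{\omega,p}\simeq 1$ and $F_a\to 0$ uniformly on compact subsets as $|a|\to 1$, so Lemma \ref{l10} together with the assumed compactness gives $\|R^{(k)}F_a\|_{\mu,q}\to 0$. Specializing $a=c(Q)$ and tracing the chain of estimates from (i)$\Rightarrow$(ii) of Theorem \ref{t1} yields
\[
\frac{\mu(B_{c(Q)})}{V_\omega(B_{c(Q)})^{q/p}(1-|c(Q)|^2)^{kq}}\lesssim\|R^{(k)}F_{c(Q)}\|_{\mu,q}^{q}\longrightarrow 0
\]
as $|c(Q)|\to 1$, which is exactly (ii).

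For (ii)$\Rightarrow$(iii), fix a bounded $\{f_l\}\subset A^p_\omega$ with $f_l\to 0$ uniformly on compacta. For $0<s<1$, set
\[
A(s)=\sup_{|c(Q)|>s,\,Q\in\mathfrak{D}_\delta}\frac{\mu(B_{c(Q)})}{V_\omega(B_{c(Q)})^{q/p}(1-|c(Q)|^2)^{kq}}.
\]
Since $\mu(K_{c(Q)})\le\mu(B_{c(Q)})$, the tail sum appearing in (iii) is bounded above by
\[
A(s)\cdot\sum_{i,Q} V_\omega(B_{c(Q)})^{q/p}\prod_{j=1}^m\langle|f_l|^{r_j}\rangle_{B_{c(Q)}},
\]
and the second factor is $\lesssim\|f_l\|_{\omega,p}^q$ by the maximal-function argument from (iii)$\Rightarrow$(i) of Theorem \ref{t1} (via Lemma \ref{l5} and Corollary \ref{c1}). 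Since $A(s)\to 0$ by (ii) and $\sup_l\|f_l\|_{\omega,p}<\infty$, (iii) follows.

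For (iii)$\Rightarrow$(i), by Lemma \ref{l10} it suffices to show $\|R^{(k)}f_l\|_{\mu,q}\to 0$ for every bounded $\{f_l\}\subset A^p_\omega$ with $f_l\to 0$ uniformly on compacta. Iterating the sparse bound of Theorem \ref{t} $m$ times (exactly as in (ii)$\Rightarrow$(iii) of Theorem \ref{t1}) gives
\[
\|R^{(k)}f_l\|_{\mu,q}^q\lesssim\sum_{i=1}^M\sum_{Q\in\mathfrak{D}^i_\delta}\frac{\mu(K_{c(Q)})}{(1-|c(Q)|^2)^{kq}}\prod_{j=1}^m\langle|f_l|^{r_j}\rangle_{B_{c(Q)}}.
\]
Given $\varepsilon>0$, I would first invoke (iii) to pick $s$ close to $1$ making the block $|c(Q)|>s$ at most $\varepsilon/2$ uniformly in $l$. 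The main obstacle is controlling the head $|c(Q)|\le s$: Lemma \ref{l7}(a) together with $V(K_{c(Q)})\simeq(1-|c(Q)|^2)^{n+1}$ forces only finitely many such cubes to exist in each grid $\mathfrak{D}^i_\delta$, the corresponding balls $B_{c(Q)}$ all lie in a fixed compact subset of $\mathbb{B}$ on which $\mu$ is finite, and uniform convergence $f_l\to 0$ there drives the finite head sum below $\varepsilon/2$ for $l$ large. Combining the two bounds closes the proof.
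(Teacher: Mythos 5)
Your proposal is correct and follows essentially the same route as the paper: the same normalized kernel test functions with Lemma \ref{l10} for (i)$\Rightarrow$(ii), the same ``smallness factor times the boundedness estimate'' for (ii)$\Rightarrow$(iii), and the same head/tail split of the sparse sum for (iii)$\Rightarrow$(i). The only (harmless) deviation is in the head term, where you argue directly via finitely many cubes and uniform convergence on a fixed compact set, whereas the paper routes this through the maximal function and the integral $\int_{B_s}|f_l|^p\,dV_\omega$; your ordering of quantifiers (fix $s$ from (iii) first, then take $l$ large) is in fact the cleaner one.
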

\begin{proof}
As in the proof of boundedness, without loss of generality, let $\omega=\nu\in \mathcal{R}$.

(i) $\Rightarrow$ (ii). If
$$f_{z,\omega}(\zeta)=\Big(\dfrac{1}{1-\overline{z}\zeta}\Big)^t,$$
where $t$ is as in (\ref{e26}), then $g_{z,\omega}=f_{z,\omega}/\|f_{z,\omega}\|_{p,\omega}$ is a bounded sequence in $A_\omega^p$, where $g_{z,\omega}$ converges to the zero function, uniformly on the compact subsets of $\mathbb{B}$, as $|z|\rightarrow 1^{-}$.
Since $R^{(k)}:A^q_\omega\rightarrow L^q_\mu$ is compact,  Lemma \ref{l10}, implies that $\|R^{(k)} g_{z,\omega}\|_{q,\mu}\rightarrow 0$,   as $|z|\rightarrow 1^{-}$. Therefore,
\begin{align*}
\dfrac{\mu(B_{c(Q)})}{V_\omega(B_{c(Q)})^\frac{q}{p} (1-|c(Q)|^2)^{kq}}&\simeq  \int_{B_{c(Q)}} \Big|R^{(k)} g_{c(Q),\omega}(\zeta)\Big|^q d\mu(\zeta)\notag\\
&\leq \int_{\mathbb{B}} \Big|R^{(k)} g_{c(Q),\omega}(\zeta)\Big|^q d\mu(\zeta) \rightarrow 0,\notag
\end{align*}
as $|c(Q)|\rightarrow 1^{-}$. Thus, $\mu \in \mathcal{V}_{\nu,\delta}^{t,s}$.

(ii) $\Rightarrow$ (iii). Fix $\varepsilon>0$.
Since $\mu \in \mathcal{V}_{\nu,\delta}^{t,s}$, there is some $0<s<1$ such that  for $Q\in \mathfrak{D}_\delta$ with $|c(Q)|>s$, we have
$$\dfrac{\mu(B_{c(Q)})}{V_\omega(B_{c(Q)})^\frac{q}{p}(1-|c(Q)|^2)^{kq}}<\varepsilon,$$
Thus,
\begin{align}
&\sum_{Q\in \mathfrak{D}^i_\delta, \ |c(Q)|>s}\dfrac{\mu(B_{c(Q)})}{(1-|c(Q)|^2)^{kq}} . \prod_{j=1}^m \langle |f_l|^{r_j}\rangle_{B_{c(Q)}}\notag\\
& \qquad \qquad \qquad \qquad \lesssim \varepsilon .\sum_{Q\in \mathfrak{D}^i_\delta, \ |c(Q)|>s} V_\omega(B_{c(Q)})^\frac{q}{p}  . \prod_{j=1}^m \langle |f_l|^{r_j}\rangle_{B_{c(Q)}}\notag\\
& \qquad \qquad \qquad \qquad \lesssim\varepsilon \|f_l\|_{p,\omega}^q.\notag
\end{align}
Here, in the last inequality, we use the proof of boundedness. Since $\varepsilon>0$ was arbitrary, the proof is complete.

(iii) $\Rightarrow$ (i).
Let $\{f_l\}_{l\in\mathbb{N}}\subset A^p_\omega$ be a bounded set, satisfying $f_{l}\rightarrow 0$ as $l\rightarrow \infty$, uniformly on the compact subsets of $\mathbb{B}$.
Let $\varepsilon>0$ and $m\in \mathbb{N}$. By (\ref{e12}), for each $l,N \geq 1$, we have
\begin{align}
\|R^{(k)} f_l\|_{q,\mu}^q &\lesssim \sum_{i=1}^M \sum_{Q\in\mathfrak{D}^i_\delta}  \dfrac{\mu(B_{c(Q)})}{(1-|c(Q)|^2)^{kq}}
 . \prod_{j=1}^m \langle |f_l|^{r_j}\rangle_{B_{c(Q)}} \notag\\
 &\lesssim \sum_{i=1}^M \sum_{Q\in\mathfrak{D}^i_\delta, |c(Q)|\leq s}   \dfrac{\mu(B_{c(Q)})}{(1-|c(Q)|^2)^{kq}}
   . \prod_{j=1}^m \langle |f_l|^{r_j}\rangle_{B_{c(Q)}}\label{e28} \\
 & \ \ \ +\sum_{i=1}^M \sum_{Q\in\mathfrak{D}^i_\delta, |c(Q)|>s}   \dfrac{\mu(B_{c(Q)})}{(1-|c(Q)|^2)^{kq}}
   . \prod_{j=1}^m \langle |f_l|^{r_j}\rangle_{B_{c(Q)}} \label{e27},
\end{align}
where $s\in (0,1)$. Let $B_s$ be the closure of
$$\bigcup_{Q\in\mathfrak{D}_\delta, |c(Q)|\leq s} B_{c(Q)}.$$
It is clear that $B_s$ is a compact set. Thus, for any $s\in (0,1)$,
\begin{align}
(\ref{e28}) &\lesssim \sum_{Q\in\mathfrak{D}^i_\delta, |c(Q)|\leq s}  V_\omega(B_{c(Q)})^\frac{q}{p}
  . \prod_{j=1}^m \langle |f_l|^{r_j}\rangle_{B_{c(Q)}} \notag\\
&\lesssim \sum_{Q\in\mathfrak{D}^i_\delta, |c(Q)|\leq s} V_\omega(B_{c(Q)})^\frac{q}{p}  . \prod_{j=1}^m  \mathcal{M}_{\omega,\mathfrak{D}_\delta}(|f_l|^\frac{1}{\alpha})(c(Q))^{\alpha r_j} \notag\\
&= \sum_{Q\in\mathfrak{D}^i_\delta, |c(Q)|\leq s} V_\omega(B_{c(Q)}) \mathcal{M}_{\frac{q}{p}-1,\omega,\mathfrak{D}_\delta}(|f_l|^\frac{1}{\alpha})(c(Q))^{\alpha q}\notag\\
&\lesssim \int_{B_s}\mathcal{M}_{\frac{q}{p}-1,\omega,\mathfrak{D}_\delta}(|f_l|^\frac{1}{\alpha})(z)^{\alpha q} dV_\omega\notag\\
&\lesssim \int_{B_s} |f_l|^{p} dV_\omega,\notag
\end{align}
where the last inequality follows from the fact that  the measure $\mathbf{1}_{B_s}dV_\omega$ is a doubling measure.
 This implies that for any $s\in (0,1)$, we can take $l$ large enough such that
\begin{equation} \label{e35}
\sum_{i=1}^M \sum_{Q\in\mathfrak{D}^i_\delta, |c(Q)|\leq s}   \dfrac{\mu(K_{c(Q)})}{(1-|c(Q)|^2)^{kq}}
   . \prod_{j=1}^m \langle |f_l|^{r_j}\rangle_{B_{c(Q)}} <\dfrac{\varepsilon}{2}.
\end{equation}
Fix such $l$. Then, by the assumption, there exists an $s_0\in (0,1)$ such that for any $s_0<s<1$,
\begin{equation} \label{e40}
\sum_{i=1}^M  \sum_{Q\in\mathfrak{D}^i_\delta, |c(Q)|> s} \dfrac{\mu(K_{c(Q)})}{(1-|c(Q)|^2)^{kq}}  . \prod_{j=1}^m \langle |f_l|^{r_j}\rangle_{B_{c(Q)}} <\dfrac{\varepsilon}{2}.
\end{equation}
The desired result then follows from (\ref{e35}) and (\ref{e40}).
\end{proof}

\begin{remark}
As we said before, Theorem \ref{t2} is an extension of \cite[Theorem 4.6]{wick2}. However, Condition (iii) in Theorem \ref{t2} is simpler than Condition (ii) in \cite[Theorem 4.6]{wick2}. The reason is that in this new version, the powers $\frac{1}{\gamma}$ and $\frac{1}{\gamma^\prime}$ and particularly the multiple $V_\omega(B_{c(Q)})^\frac{1}{\gamma}$ have been omitted.
\end{remark}

\begin{remark}
Again the proof of the following result is similar to that in \cite[Theorem 2.1]{pelaez1}. Hence, we omitted its proof.
\begin{itemize}
\item Let $\omega\in \widehat{\mathcal{D}}$, $0< p\leq q<\infty$, and $\mu$ be a positive Borel measure on $\mathbb{B}$ so that  $R^{(k)}:A^p_\omega\rightarrow L^q_\mu$ is bounded. Then,
 $R^{(k)}:A^p_\omega\rightarrow L^q_\mu$ is compact if and only if, for some (every) calibre $\delta>0$,
$$\lim_{Q\in \mathfrak{D}_\delta, |c(Q)|\rightarrow 1^-} \dfrac{\mu(\widehat{K_{c(Q)}})}{V_\omega(\widehat{ K_{c(Q)}})^\frac{q}{p}(1-|c(Q)|^2)^{kq}}=0.$$
\end{itemize}
\end{remark}

\section{Reverse Bergman Carleson measures}

In this section, the reverse Bergman Carleson measures will be characterized.
First, we give two results from \cite{luecking5} which are the best obtained results on this topic.\\

\textbf{Theorem A}. \cite[Theorem 4.2]{luecking5}
Let $\alpha>-1$,  $0<\varepsilon, C,q<\infty$, and $\mu$ be a $(q,A_\alpha^q)$-Carleson measure. Then, there is an $r>0$ such that if $G=\{z\in \mathbb{D}; \ \frac{\mu(B(z,r))}{(1-|z|)^{\alpha+2}}>\varepsilon\}$ is a $C$-dominated set for $A^q_\alpha(\mathbb{D})$, then $\mu$ is a reverse $(q,A_\alpha^q)$-Carleson measure.\\

Let $C$ be a positive constant and $G$ be a Borel subset of $\mathbb{B}$. Then, we say that $G$ is a $C$-dominated set for $A_\omega^q$, if
$$\int_G |f|^q dV_\omega \geq C \int_\mathbb{B} |f|^q dV_\omega, \ \ f\in A_\omega^q.$$
 Sometimes when the quantity of $C$ is not important for us, we shortly say that $G$ is a dominated set for $A_\omega^q$.\\

\textbf{Theorem B}.  \cite[Theorem 4.3]{luecking5}
Let $\alpha>-1$ and $0<q<\infty$. If $\mu$ is a reverse $(q,A_\alpha^q)$-Carleson measure,
then there are $r,\varepsilon>0$ such that $\frac{\mu(B(z,r))}{(1-|z|)^{\alpha+2}}>\varepsilon$ for all $z\in \mathbb{D}$.\\

Consider $\delta>0$. Define the sets
$$H_\varepsilon(\mu;\omega;\delta):= \{ Q\in\mathfrak{D}_\delta; \ \dfrac{\mu(B_{c(Q)})}{V_\omega(B_{c(Q)})}>\varepsilon\}, \ \ G_\varepsilon(\mu;\omega;\delta):=\bigcup_{Q\in H_\varepsilon(\mu;\omega;\delta)} B_{c(Q)},$$
and for $f\in A^q_\omega$,
$$H_\varepsilon(f;\mu;\delta;r):= \Big{\{} Q\in\mathfrak{D}_\delta; \ \langle |f|^{r} \rangle_{\mu, B_{c(Q)}}>\varepsilon \langle |f|^{r} \rangle_{B_{c(Q)}}\Big{\}}.$$

Now we present the main result of the paper.

\begin{theorem} \label{t5}
Let $\omega\in \mathcal{D}$, $\nu(r)=\widehat{\omega}(r)/(1-r)$, and $0<q<\infty$. If $\mu$ is $\omega$-Bergman Carleson measure,
then the following statements are equivalent.
\begin{itemize}
\item[(i)]  $\mu$ is a reverse $\omega$-Bergman Carleson measure.
\item[(ii)] $\mathcal{M}_{\mu,\mathfrak{D}_\delta}(|.|^\frac{1}{\alpha})^\alpha: A_\omega^q\rightarrow L_\mu^q$ is bounded below, for some (every) $\alpha,\delta>0$, where $\alpha q>1$.
\item[(iii)] For some (every) $m\in \mathbb{N}$ and calibre $\delta$, there is an $\varepsilon>0$ such that
\begin{equation*}
\| f\|_{\nu,q}^q \lesssim \inf_{r_1+...+r_m=q} \Big( \sum_{Q\in H_\varepsilon(\mu;
\nu;\delta)} V_\nu(B_{c(Q)}).  \langle |f|^{r_m}\rangle_{\mu,B_{c(Q)}} . \prod_{j=1}^{m-1} \langle |f|^{r_j}\rangle_{B_{c(Q)}}\Big), \ \ f\in A_\omega^q.
\end{equation*}
\item[(iv)] For some (every) $m\in \mathbb{N}$ and calibre $\delta$, there is an $\varepsilon>0$ such that
\begin{equation*}
\| f\|_{\nu,q}^q \lesssim \inf_{r_1+...+r_m=q} \Big( \sum_{Q\in H_\varepsilon(\mu;
\nu;\delta)} V_\omega(B_{c(Q)}) . \prod_{j=1}^m \langle |f|^{r_j}\rangle_{B_{c(Q)}}. \mathbf{1}_{H_\varepsilon(f;\mu;\delta;r_m)} (Q)\Big), \ \ f\in A_\omega^q.
\end{equation*}
\end{itemize}
\end{theorem}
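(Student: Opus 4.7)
The approach is to establish the cycle (i) $\Leftrightarrow$ (iii) $\Leftrightarrow$ (iv) by a sparse decomposition and absorption argument, and to treat (i) $\Leftrightarrow$ (ii) separately via the dyadic Lebesgue differentiation theorem and Lemma \ref{l5}. By Proposition \ref{p3} I may and do replace $\omega$ throughout by $\nu$, so $\omega=\nu\in\mathcal{R}$; this puts the regular-weight comparisons (\ref{e16})--(\ref{e19}) at my disposal, in particular the identity $V_\nu(B_{c(Q)})\langle g\rangle_{B_{c(Q)}}\simeq \int_{B_{c(Q)}}g\,dV_\nu$ for nonnegative $g$.

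For (i) $\Rightarrow$ (iii) I first handle $m=1$ and $r_1=q$. Applying Theorem \ref{t} with $k=0$ to $|f|^q$ and integrating against $d\mu$ over $\mathbb{B}\setminus G_\varepsilon$, only cubes $Q\notin H_\varepsilon(\mu;\nu;\delta)$ can survive, because $K_{c(Q)}\subset B_{c(Q)}\subset G_\varepsilon$ whenever $Q\in H_\varepsilon$; for such $Q$ the definition of $H_\varepsilon$ yields $\mu(B_{c(Q)})\leq \varepsilon V_\nu(B_{c(Q)})$, and the finite overlap of $\{B_{c(Q)}\}_{Q\in\mathfrak{D}_\delta^i}$ from Lemma \ref{l7}(b) gives $\int_{\mathbb{B}\setminus G_\varepsilon}|f|^q\,d\mu\lesssim \varepsilon\|f\|_{\nu,q}^q$. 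Combining with the reverse Carleson hypothesis and choosing $\varepsilon$ small enough to absorb, I obtain $\|f\|_{\nu,q}^q\lesssim \int_{G_\varepsilon}|f|^q\,d\mu\leq \sum_{Q\in H_\varepsilon}\mu(B_{c(Q)})\langle|f|^q\rangle_{\mu,B_{c(Q)}}$, and one more use of the forward Carleson inequality $\mu(B_{c(Q)})\lesssim V_\nu(B_{c(Q)})$ (Remark \ref{r1}) yields (iii) for $m=1$. The general $m$ case follows by iterating the subharmonicity estimate $|f(z)|^{r_j}\lesssim \langle|f|^{r_j}\rangle_{B_{c(Q)}}$ for $z\in B_{c(Q)}$ (the same estimate that drives Theorem \ref{t}) inside the integral $\int_{B_{c(Q)}}|f|^q\,d\mu$, peeling off one factor at a time until only $|f|^{r_m}$ remains integrated against $d\mu$. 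The converse (iii) $\Rightarrow$ (i) reduces to the $m=1$ case: on $H_\varepsilon(\mu;\nu;\delta)$ one has $V_\nu(B_{c(Q)})\leq \varepsilon^{-1}\mu(B_{c(Q)})$, and summing with finite overlap gives $\|f\|_{\nu,q}^q\lesssim \|f\|_{\mu,q}^q$.

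For (iii) $\Leftrightarrow$ (iv), decompose the sum in (iii) according to whether $Q\in H_\varepsilon(f;\mu;\delta;r_m)$; on the complement $\langle|f|^{r_m}\rangle_{\mu,B_{c(Q)}}\leq \varepsilon\langle|f|^{r_m}\rangle_{B_{c(Q)}}$, while $\sum_Q V_\nu(B_{c(Q)})\prod_j\langle|f|^{r_j}\rangle_{B_{c(Q)}}\lesssim \|f\|_{\nu,q}^q$ by Theorem \ref{t1} applied to the measure $V_\nu$, so the contribution from the complement is $\lesssim \varepsilon\|f\|_{\nu,q}^q$ and can be absorbed into the left-hand side; the reverse direction uses the subharmonicity bound $\langle|f|^{r_m}\rangle_{\mu,B_{c(Q)}}\lesssim \langle|f|^{r_m}\rangle_{B_{c(Q)}}$, which holds because $\mu$ is forward Carleson. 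For (i) $\Leftrightarrow$ (ii), the dyadic Lebesgue differentiation theorem supplies $|f(z)|^q=(|f(z)|^{1/\alpha})^{\alpha q}\leq \mathcal{M}_{\mu,\mathfrak{D}_\delta}(|f|^{1/\alpha})(z)^{\alpha q}$ for $\mu$-a.e.\ $z$, so (i) yields (ii) upon integration against $d\mu$; conversely, Lemma \ref{l5} with $t=0$ and exponent $\alpha q>1$ gives the $L^{\alpha q}(\mu)$-boundedness of $\mathcal{M}_{\mu,\mathfrak{D}_\delta}$, whence $\int \mathcal{M}_{\mu,\mathfrak{D}_\delta}(|f|^{1/\alpha})^{\alpha q}\,d\mu\lesssim \int|f|^q\,d\mu=\|f\|_{\mu,q}^q$, and (ii) implies (i). The main obstacle I anticipate is the quantitative calibration of $\varepsilon$ in the absorption steps of (i) $\Rightarrow$ (iii) and (iii) $\Rightarrow$ (iv): the parameter must be chosen strictly smaller than the reciprocal of the product of the reverse-Carleson constant with the implicit constants from the sparse bound and the finite-overlap lemma, and tracking these constants uniformly across the iteration in $m$ (and across the $M$ dyadic grids) is the place where the argument could slip.
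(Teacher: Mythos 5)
Your cycle breaks at (i) $\Rightarrow$ (ii). The pointwise inequality $|f(z)|^{1/\alpha}\leq\mathcal{M}_{\mu,\mathfrak{D}_\delta}(|f|^{1/\alpha})(z)$ for $\mu$-a.e.\ $z$ is not available. The averaging sets in $\mathcal{M}_{\mu,\mathfrak{D}_\delta}$ are the dyadic tents $\widehat{K_{c(Q)}}$, and a fixed point $z\in\mathbb{B}$ lies in only finitely many of them (namely the tents over the ancestors of the unique unit cube $K_\beta$ containing $z$, for each of the $M$ grids), so these sets do not shrink to $z$ and no Lebesgue differentiation theorem applies; moreover, unlike the Lebesgue-measure estimate (\ref{e21}), there is no subharmonicity substitute for averages against an arbitrary measure --- $\mu$ may charge only the part of $\widehat{K_\beta}$ where $|f|$ is small, making $\langle|f|^{1/\alpha}\rangle_{\mu,\widehat{K_\beta}}$ arbitrarily smaller than $|f(z)|^{1/\alpha}$. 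This is exactly why the paper reaches (ii) only through (iii): on $H_\varepsilon(\mu;\nu;\delta)$ one has $V_\omega(B_{c(Q)})<\varepsilon^{-1}\mu(B_{c(Q)})$, and combining this with the forward Carleson bound $\mu(\widehat{K_{c(Q')}})\lesssim V_\omega(\widehat{K_{c(Q')}})$ converts $V_\omega(B_{c(Q)})\langle|f|^{1/\alpha}\rangle_{\mu,B_{c(Q)}}$ into $\mu(B_{c(Q)})\,\mathcal{M}_{\mu,\mathfrak{D}_\delta}(|f|^{1/\alpha})(z)$ as in (\ref{e22}), after which H\"older's inequality and Corollary \ref{c1} give $\|f\|_{\omega,q}\lesssim\|\mathcal{M}_{\mu,\mathfrak{D}_\delta}(|f|^{1/\alpha})^{\alpha}\|_{q,\mu}$. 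Some version of that argument is needed; the differentiation shortcut cannot be repaired.

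Two further gaps. First, your (iii) $\Rightarrow$ (i) only covers $m=1$: for $m\geq 2$ the factors $\langle|f|^{r_j}\rangle_{B_{c(Q)}}$ are Lebesgue averages and cannot be summed against $d\mu$ by finite overlap alone; one must dominate them by $\mathcal{M}_{\omega,\mathfrak{D}_\delta}(|f|^{r_j})$, apply H\"older in $L^{q/r_j}(\mu)$, and use the forward Carleson embedding --- which is precisely the paper's route (iii) $\Rightarrow$ (ii) $\Rightarrow$ (i), so (ii) cannot be detached from the cycle anyway. Second, in your (iii) $\Rightarrow$ (iv) the bound $\langle|f|^{r_m}\rangle_{\mu,B_{c(Q)}}\lesssim\langle|f|^{r_m}\rangle_{B_{c(Q)}}$ with the \emph{same} ball on both sides fails: $\sup_{B}|f|^{r}$ is controlled only by the mean of $|f|^r$ over a strictly larger ball (for $f(z)=z^N$ the ratio of the sup to the same-ball average grows like $N$), and the forward Carleson hypothesis does not supply the missing inequality. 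The paper sidesteps this by always integrating over the small cubes $K_{c(Q)}$ against averages over the larger balls $B_{c(Q)}$, and by closing the loop with the contradiction argument (i) $\Rightarrow$ (iv) rather than proving (iii) $\Rightarrow$ (iv) directly. On the positive side, your absorption argument for (i) $\Rightarrow$ (iii) --- splitting $\|f\|_{\mu,q}^q$ over $G_\varepsilon(\mu;\nu;\delta)$ and its complement, estimating the complement by $\varepsilon\|f\|_{\nu,q}^q$ via sparse domination and finite overlap, and absorbing --- is a clean constructive alternative to the paper's proof by contradiction and would be worth keeping once the steps above are repaired.
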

\begin{proof}
Again, without loss of generality, let $\omega=\nu\in \mathcal{R}$.

(iv)$\Rightarrow$(iii) is obvious.

(iii)$\Rightarrow$(ii). Let (iii) hold for $m\in \mathbb{N}$ and $\alpha>0$ be such that $\alpha q>1$. Let $r_1,...,r_{m-1}$ satisfy $r_1+...+r_{m-1}+\frac{1}{\alpha}=q$. Then, from (iii), we have
\begin{align} \label{e33}
\| f\|_{\omega,q}^q &\lesssim \sum_{Q\in H_\varepsilon(\mu;\omega;\delta)}  V_\omega(B_{c(Q)}). \prod_{j=1}^{m-1} \langle |f|^{r_j}\rangle_{B_{c(Q)}}. \langle |f|^\frac{1}{\alpha}\rangle_{\mu, B_{c(Q)}}\notag\\
&= \sum_{Q\in H_\varepsilon(\mu;\omega;\delta)}  V_\omega(B_{c(Q)}). \prod_{j=1}^{m-1} \langle |f|^{r_j}\rangle_{B_{c(Q)}}.\dfrac{1}{\mu(B_{c(Q)})} \int_{B_{c(Q)}}  |f(\zeta)|^{\frac{1}{\alpha}} d\mu(\zeta)\notag\\
&\lesssim  \sum_{Q\in H_\varepsilon(\mu;\omega;\delta)}  \mu(B_{c(Q)}). \prod_{j=1}^{m-1} \langle |f|^{r_j}\rangle_{B_{c(Q)}}.\dfrac{1}{V_\omega(B_{c(Q)})} \int_{B_{c(Q)}}  |f(\zeta)|^{\frac{1}{\alpha}} d\mu(\zeta)\notag\\
&\lesssim  \sum_{Q\in H_\varepsilon(\mu;\omega;\delta)} \mu(B_{c(Q)}). \prod_{j=1}^{m-1} \langle |f|^{r_j}\rangle_{B_{c(Q)}}. \dfrac{1}{V_\omega(\widehat{ B_{c(Q)})}} \int_{\widehat{ B_{c(Q)}}}  |f(\zeta)|^{\frac{1}{\alpha}} d\mu(\zeta),
\end{align}
where $\widehat{ B_\alpha}=\bigcup_{\beta\geq \alpha} B_\beta$. If $z\in B_{c(Q)}$, then from Lemma \ref{l2}, there is a $Q^\prime \in \mathfrak{D}_\delta$, where $z\in \widehat{K_{c(Q^\prime)}}$, $\widehat{ B_{c(Q)}}\subseteq \widehat{ K_{c(Q^\prime)}}$, and $\widehat{ B_{c(Q)}}\simeq \widehat{ K_{c(Q^\prime)}}$. Since $\mu$ is a $\omega$-Bergman Carleson measure, it follows from Remark \ref{r1} that
$$\mu(\widehat{ K_{c(Q^\prime)})}\lesssim V_\omega(\widehat{ K_{c(Q^\prime)})} \simeq V_\omega(\widehat{ B_{c(Q)})}.$$
 Hence,
\begin{equation}\label{e22}
\dfrac{1}{V_\omega(\widehat{ B_{c(Q)})}} \int_{\widehat{ B_{c(Q)}}}  |f(\zeta)|^{\frac{1}{\alpha}} d\mu(\zeta) \lesssim \mathcal{M}_{\mu,\mathfrak{D}_\delta} (|f|^{\frac{1}{\alpha}})(z).
\end{equation}
 Thus, by (\ref{e22}), (\ref{e36}), H\"{o}lder's inequality, and Corollary \ref{c1} (for $k=0$), we obtain
\begin{align*}
(\ref{e33})
&\lesssim \int_\mathbb{B} \prod_{j=1}^{m-1} \mathcal{M}_{\omega,\mathfrak{D}_\delta} (|f|^{r_j})(z)  \mathcal{M}_{\mu,\mathfrak{D}_\delta} (|f|^{\frac{1}{\alpha}})(z) d\mu(z) \\
&\leq \prod_{j=1}^{m-1} \Big( \int_\mathbb{B} \mathcal{M}_{\omega,\mathfrak{D}_\delta} (|f|^{r_j})^\frac{q}{r_j} d\mu(z) \Big)^\frac{r_j}{q}  \Big( \int_\mathbb{B} \mathcal{M}_{\mu, \mathfrak{D}_\delta} (|f|^{\frac{1}{\alpha}})^{\alpha q} d\mu(z) \Big)^\frac{1}{\alpha q} \\
&\lesssim \|f\|_{q,\omega}^{q-\frac{1}{\alpha}} \|\mathcal{M}_{\mu, \mathfrak{D}_\delta} (|f|^{\frac{1}{\alpha}})^{\alpha}\|_{q,\mu}^{\frac{1}{\alpha}}.
\end{align*}
That is, $\|f\|_{\omega,q}\lesssim \|\mathcal{M}_{\mu, \mathfrak{D}_\delta} (|f|^{\frac{1}{\alpha}})^{\alpha}\|_{q,\mu}$ which is the desired result.

(ii)$\Rightarrow$(i). From the boundedness of $\mathcal{M}_{\mu,\mathfrak{D}_\delta}:L^{\alpha q}(\mu)\rightarrow L^{\alpha q}(\mu)$ and the boundedness below of $\mathcal{M}_{\mu,\mathcal{T}}(|.|^\frac{1}{\alpha})^\alpha: A_\omega^q\rightarrow L_\mu^q$, we obtain
$$\|f\|_{\omega,q}\lesssim \|\mathcal{M}_{\mu,\mathcal{T}}(|f|^\frac{1}{\alpha})^\alpha\|_{\mu,q}\lesssim \|f\|_{\mu,q}.$$

(i)$\Rightarrow$(iv). Let (i) hold but (iv) not hold. Thus, there are an $m\in \mathbb{N}$ such that for every $k\in \mathbb{N}$, there are a sequence $0<r^k_1,..., r^k_m \leq q$, where $r^k_1+...+r^k_m=q$, and $f_k\in A_\omega^q$, where $\|f_k\|_{\omega,q}=1$ such that
\begin{equation} \label{e34}
\lim_{k\rightarrow \infty}\sum_{Q\in H_\frac{1}{k}(\mu;\omega;\delta)} V_\omega(B_{c(Q)}) . \prod_{j=1}^m \langle |f|^{r_j^k}\rangle_{B_{c(Q)}} . \mathbf{1}_{H_\frac{1}{k}(f_k;\mu;r_m^k)} (Q)=0.
\end{equation}
By using (\ref{e12}), one can see that
\begin{align*}
\|f_k \|_{q,\mu}^q &\lesssim \sum_{i=1}^M \sum_{Q\in\mathfrak{D}^i_\delta} . \prod_{j=1}^{m-1} \langle |f|^{r_j^k}\rangle_{B_{c(Q)}} . \int_{K_{c(Q)}} |f_k|^{r_m^k}d\mu \\
&=  \sum_{Q\in H_\frac{1}{k}(\mu;\omega;\delta)} \prod_{j=1}^{m-1} \langle |f|^{r_j^k}\rangle_{B_{c(Q)}} .\int_{K_{c(Q)}} |f_k|^{r_m^k}d\mu. \mathbf{1}_{H_\frac{1}{k}(f_k;\mu;r_k)} (c(Q))\\
& \ \ \ +  \sum_{Q\in \mathfrak{D}_\delta \setminus H_\frac{1}{k}(\mu;\omega;\delta)} \prod_{j=1}^{m-1} \langle |f|^{r_j^k}\rangle_{B_{c(Q)}} .\int_{K_{c(Q)}} |f_k|^{r_m^k}d\mu\\
&\ \ \ +  \sum_{Q\in  \mathfrak{D}_\delta \setminus H_\frac{1}{k}(f_k;\mu;r_k)}  \prod_{j=1}^{m-1} \langle |f|^{r_j^k}\rangle_{B_{c(Q)}} \int_{K_{c(Q)}} |f_k|^{r_m^k}d\mu\\
&=I_1^k+I_2^k+I_3^k.
\end{align*}

Again from (\ref{e12}) and the assumption that $\mu$ is a $\omega$-Bergman Carleson measure, we obtain
\begin{align*}
I_1^k&\lesssim \sum_{Q\in H_\frac{1}{k}(\mu;\omega;\delta)} V_\omega(B_{c(Q)}) . \prod_{j=1}^m \langle |f|^{r_j^k}\rangle_{B_{c(Q)}} . \mathbf{1}_{H_\frac{1}{k}(f_k;\mu;r_m^k)} (c(Q)).
\end{align*}
Thus, (\ref{e34}) implies that $\lim_{k\rightarrow\infty} I_1^k=0$.

 If $Q\in \mathfrak{D}_\delta \setminus H_\frac{1}{k}(\mu;\omega;\delta)$, then $\mu(B_{c(Q)})\leq \frac{1}{k} V_\omega(B_{c(Q)})$. Thus, from (\ref{e12}),
\begin{align*}
I_2^k &\lesssim \sum_{Q\in \mathfrak{D}_\delta \setminus H_\frac{1}{k}(\mu;\omega;\delta)} \mu(B_{c(Q)}). \prod_{j=1}^m \langle |f|^{r_j^k}\rangle_{B_{c(Q)}}\\
&\leq \dfrac{1}{k} \sum_{Q\in \mathfrak{D}_\delta \setminus H_\frac{1}{k}(\mu;\omega;\delta)} V_\omega(B_{c(Q)}). \prod_{j=1}^m \langle |f|^{r_j^k}\rangle_{B_{c(Q)}}\\
&\lesssim \dfrac{1}{k} \|f_k\|_{\omega,q}^q=\dfrac{1}{k}\rightarrow 0,
\end{align*}
as $k\rightarrow \infty$. For the third quantity, from the assumption we have
\begin{align*}
I_3^k &= \sum_{Q\in  \mathfrak{D}_\delta \setminus H_\frac{1}{k}(f_k;\mu;r_k)} \mu(B_{c(Q)}). \prod_{j=1}^{m-1} \langle |f|^{r_j^k}\rangle_{B_{c(Q)}}. \langle |f|^{r_m^k}\rangle_{\mu,B_{c(Q)}}\\
 &\leq \dfrac{1}{k} \sum_{Q\in  \mathfrak{D}_\delta \setminus H_\frac{1}{k}(f_k;\mu;r_k)} V_\omega(B_{c(Q)}). \prod_{j=1}^{m} \langle |f|^{r_j}\rangle_{B_{c(Q)}}\\
& \lesssim \dfrac{1}{k} \|f_k\|_{\omega,q}^q=\dfrac{1}{k}\rightarrow 0,
\end{align*}
as $k\rightarrow \infty$. Thus, $\lim_{l\rightarrow\infty}\|f_l\|_{q,\mu}=0$ which contradicts (i). Therefore, (iv) must hold.
\end{proof}

\begin{remark}
Note that we can choose the power $r_m$ in Theorem \ref{t5} (iii, iv) as small as we like.
\end{remark}

\begin{remark}
The gap between Theorems $A$ and $B$ is that the $r$ obtained in Theorem $A$ is small, while the $r$ in Theorem $B$ is large. To fill this gap, we used the sets $H_\varepsilon(\mu;\omega;\delta)$ and $G_\varepsilon(\mu;\omega;\delta)$ in Theorem \ref{t5} instead of set $G$ in Theorem $A$.
\end{remark}

Now, we will try to give the new versions of Theorems $A$ and $B$ using the sets $G_\varepsilon(\mu;\omega;\delta)$. The following theorem is the new version of Theorem A. However, again again in this new version,  the calibre (or equivalently the radius of the Bergman balls) must be small enough.

\begin{theorem}
Let $\omega\in \mathcal{R}$, $0<\varepsilon, C,q<\infty$, and $\mu$ be $\omega$-Carleson measure. Then, there is a calibre $\delta$  such that if $G_\varepsilon(\mu;\omega;\delta)$ is a $C$-dominated set for $A^q_\omega$, then $\mu$ is a reverse $\omega$-Carleson measure.
\end{theorem}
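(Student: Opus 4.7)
The plan is to deduce the reverse Carleson inequality by first converting the $C$-domination of $G_\varepsilon(\mu;\omega;\delta)$ into a uniform lower density of $\mu$ with respect to $V_\omega$ on Bergman balls of a fixed size, and then using the Radon--Nikodym derivative of $\mu$ together with Luecking's sampling/dominating-set characterization for $A_\omega^q$ to close the argument.

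First, I would invoke Luecking's characterization of dominating sets for $A_\omega^q$ (the weighted analogue of the classical sampling theorem, valid here because $\omega\in\mathcal{R}$ is essentially constant on Bergman balls by (\ref{e17}) and (\ref{e19})): the $C$-domination of $G_\varepsilon(\mu;\omega;\delta)$ yields constants $\rho,\gamma>0$ with
\[
V_\omega\bigl(G_\varepsilon(\mu;\omega;\delta) \cap B(z,\rho)\bigr) \;\geq\; \gamma\, V_\omega(B(z,\rho)), \qquad z\in\mathbb{B}.
\]
At this point I choose the calibre $\delta$ so that $R_\delta$ is small relative to $\rho$; this is where the stated smallness of $\delta$ plays its role and controls the constants below.

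Next I transfer this geometric density from $V_\omega$ to $\mu$ by a covering argument. For each $z\in\mathbb{B}$, set
\[
\mathcal{Q}_z := \{Q \in H_\varepsilon(\mu;\omega;\delta): B_{c(Q)} \cap B(z,\rho) \neq \emptyset\},
\]
so that $B_{c(Q)}\subset B(z,\rho+2R_\delta)$ for every $Q\in\mathcal{Q}_z$, and $G_\varepsilon\cap B(z,\rho)\subset\bigcup_{Q\in\mathcal{Q}_z} B_{c(Q)}$. The bounded-overlap property of the balls $\{B_{c(Q)}\}$ from Lemma \ref{l7}(b) together with the defining inequality $\mu(B_{c(Q)})>\varepsilon V_\omega(B_{c(Q)})$ for $Q\in H_\varepsilon$ then give
\begin{align*}
\mu(B(z,\rho+2R_\delta)) &\;\geq\; \frac{1}{C_R}\sum_{Q\in\mathcal{Q}_z} \mu(B_{c(Q)}) \;\geq\; \frac{\varepsilon}{C_R}\sum_{Q\in\mathcal{Q}_z} V_\omega(B_{c(Q)})\\
&\;\geq\; \frac{\varepsilon}{C_R}\, V_\omega\Big(\bigcup_{Q\in\mathcal{Q}_z} B_{c(Q)}\Big) \;\geq\; \frac{\varepsilon\gamma}{C_R}\, V_\omega(B(z,\rho)).
\end{align*}
Using (\ref{e23}) and the regularity of $\omega$ to compare $V_\omega(B(z,\rho))$ with $V_\omega(B(z,\rho+2R_\delta))$, this produces a uniform lower density $\mu(B(z,\rho'))\geq\gamma' V_\omega(B(z,\rho'))$ for all $z\in\mathbb{B}$, where $\rho':=\rho+2R_\delta$ and $\gamma'>0$.

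Third, I extract a bounded Radon--Nikodym derivative. Since $\mu$ is an $\omega$-Carleson measure and $V_\omega$ is doubling ($\omega\in\widehat{\mathcal{D}}$), a standard Vitali/Besicovitch covering argument upgrades the Carleson bound from Bergman balls to every Borel set, so $\mu\ll V_\omega$ with $d\mu=g\,dV_\omega$ and $0\leq g\leq C_1$. The preceding density estimate then becomes $\langle g\rangle_{V_\omega,B(z,\rho')}\geq\gamma'$ for every $z$, hence $E:=\{g\geq \gamma'/2\}$ satisfies
\[
V_\omega(E\cap B(z,\rho')) \;\geq\; \frac{\gamma'}{2C_1}\, V_\omega(B(z,\rho')), \qquad z\in\mathbb{B}.
\]
The converse direction of Luecking's sampling theorem turns this into $\int_E |f|^q\,dV_\omega \geq c\,\|f\|_{\omega,q}^q$ for some $c>0$, and then
\[
\int_\mathbb{B} |f|^q\, d\mu \;\geq\; \int_E |f|^q g\, dV_\omega \;\geq\; \frac{\gamma'}{2}\int_E |f|^q\, dV_\omega \;\geq\; \frac{c\gamma'}{2}\,\|f\|_{\omega,q}^q,
\]
which is the desired reverse $\omega$-Bergman Carleson inequality.

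The main obstacle is the clean deployment of Luecking's sampling theorem for $A_\omega^q$ in both directions. For the classical case $\omega=(1-|z|^2)^\alpha$ this is Luecking's theorem from Am. J. Math. (1985); for regular weights the classical proof transfers because $V_\omega$ is a constant multiple of $V$ on each Bergman ball by (\ref{e17}) and (\ref{e19}). The bounded Radon--Nikodym step is routine in a doubling metric measure space, so the substantive content of the proof lies in the two applications of Luecking's characterization and in the covering argument transferring density from $V_\omega$ to $\mu$.
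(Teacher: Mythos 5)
There is a genuine gap, and it sits at the heart of your third step. You claim that because $\mu$ is an $\omega$-Carleson measure and $V_\omega$ is doubling, a covering argument yields $d\mu=g\,dV_\omega$ with $0\le g\le C_1$. This is false: the Carleson condition only controls $\mu(B(z,r))/V_\omega(B(z,r))$ for balls of a \emph{fixed} radius, and says nothing as $r\to 0$, so it does not force absolute continuity. Atomic measures of the form $\sum_k V_\omega(B(z_k,1))\,\delta_{z_k}$ over a separated sequence $\{z_k\}$ are Carleson measures that are singular with respect to $V_\omega$; indeed, sampling sequences produce exactly such purely atomic \emph{reverse} Carleson measures, so any argument that passes through a bounded Radon--Nikodym derivative excludes the most important examples the theorem is meant to cover. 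Without $g$, your final chain of inequalities cannot even be written down, and decomposing $\mu$ into absolutely continuous and singular parts does not help, since the absolutely continuous part may vanish.

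The problem is not only local to that step: the intermediate conclusion of your first two steps --- a uniform lower bound $\mu(B(z,\rho'))\ge\gamma' V_\omega(B(z,\rho'))$ for some fixed, possibly large $\rho'$ --- is strictly weaker than the reverse Carleson property. A sparse (non-sampling) lattice carrying masses $\simeq V_\omega(B(z_k,\rho'))$ satisfies this density condition for large $\rho'$ yet is not reverse Carleson; this is precisely the gap between Luecking's Theorems 4.2 and 4.3 that the theorem is designed to close, and your reduction discards the information of \emph{where inside each ball} $\mu$ puts its mass relative to where $|f|$ is large. The paper's proof keeps exactly this information: it uses the sparse bound (\ref{e12}) to localize $\|f\|_{\omega,q}^q$ onto the small balls $B_{c(Q)}$ with $Q\in H_\varepsilon(\mu;\omega;\delta)$, trades $V_\omega(B_{c(Q)})$ for $\varepsilon^{-1}\mu(B_{c(Q)})$ on those balls, replaces the Lebesgue average $\langle|f|^{1/\alpha}\rangle_{B_{c(Q)}}$ by the $\mu$-average plus an oscillation error controlled by $|f(z)-f(\zeta)|\lesssim\beta(z,\zeta)\|f\|_{\omega,q}$, absorbs the resulting $R_\delta\|f\|_{\omega,q}^q$ term by choosing the calibre $\delta$ small (this is the actual role of the smallness of $\delta$, not the comparison of radii you propose), and concludes with the $L^{\alpha q}_\mu$-boundedness of $\mathcal{M}_{\mu,\mathfrak{D}_\delta}$ to land on $\|f\|_{\mu,q}^q$. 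If you want a workable route, you need an argument of this type that compares $V$-averages and $\mu$-averages of $|f|$ on each small ball directly, rather than passing through a density of $\mu$ on balls.
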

\begin{proof}
Consider $\alpha>0$ such that $\alpha q>1$. Then,
\begin{align*}
C\| f\|_{\omega,q}^q &\leq   \int_{G_\varepsilon(\mu;\omega;\delta)} |f|^q dV_\omega\\
 &\lesssim \sum_{Q\in H_\varepsilon(\mu;\omega;\delta)} V_\omega(B(c(Q),2R_\delta)). \langle |f|^\frac{1}{\alpha}\rangle_{B(c(Q),2R_\delta)}^{\alpha q}\\
&\lesssim \sum_{Q\in H_\varepsilon(\mu;\omega;\delta)} \mu(B(c(Q),2R_\delta)). \langle |f|^\frac{1}{\alpha}\rangle_{B(c(Q),2R_\delta)}^{\alpha q}\\
&\lesssim  \sum_{Q\in H_\varepsilon(\mu;\omega;\delta)} \mu(B(c(Q),2R_\delta)). \Big(\langle |f|^\frac{1}{\alpha}\rangle_{B(c(Q),2R_\delta),\mu} +   \langle |f - \langle |f|\rangle_{K_{c(Q)},\mu}|^\frac{1}{\alpha}\rangle_{K_{c(Q)}}  \Big)^{\alpha q}\\
&\lesssim  \sum_{Q\in H_\varepsilon(\mu;\omega;\delta)} \mu(K_{c(Q)}).\langle |f|^\frac{1}{\alpha}\rangle_{B(c(Q),2R_\delta),\mu}^{\alpha q}\\
& \qquad +  \sum_{Q\in H_\varepsilon(\mu;\omega;\delta)} V_\omega(B(c(Q),2R_\delta)).  \langle |f - \langle |f|\rangle_{B(c(Q),2R_\delta),\mu}|^\frac{1}{\alpha}\rangle_{B(c(Q),2R_\delta)}^{\alpha q}.
\end{align*}
By an standard proof we can see that $|f(z)-f(\zeta)|\lesssim \beta(z,\zeta) \|f\|_{\omega,q}$, when $\beta(z,\zeta)\leq 2 R_\delta$. Thus,
\begin{align*}
\| f\|_{\omega,q}^q &\lesssim \int_\mathbb{B} \mathcal{M}_{\mu, \mathfrak{D}_\delta} (|f|^\frac{1}{\alpha})(z)^{\alpha q} d\mu(z) +R_\delta \| f\|_{\omega,q}^q.
\end{align*}
Now, from the boundedness of $\mathcal{M}_{\mu,\mathfrak{D}_\delta}: L_\mu^{\alpha q}\rightarrow L_\mu^{\alpha q}$ and by taking $\delta$ small enough, we conclude the desired result.
\end{proof}

The following Theorem is a new version of Theorem $B$.  Note that in part (i) of this new version, the radius of the Bergman balls does not need to be large.

\begin{theorem} \label{t3}
Let $\omega\in \mathcal{R}$ and $0<q<\infty$. If $\mu$ is a reverse $\omega$-Bergman Carleson measure,
then the following statements hold.
\begin{itemize}
\item[(i)]  For every calibre $\delta$, there is an $\varepsilon>0$ such that $G_\varepsilon(\mu;\omega;\delta)$ is a dominated set for $A_\omega^q(\mathbb{B})$.
\item[(ii)] There are $\delta,\varepsilon>0$ such that  $H_\varepsilon(\mu;\omega;\delta)=\mathfrak{D}_\delta$.
\end{itemize}
\end{theorem}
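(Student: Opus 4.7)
My plan is to prove (i) and (ii) separately, using the reverse Carleson estimate $\int |f|^q d\mu \geq D \|f\|_{\omega,q}^q$ together with the forward Carleson property (implicit in the paper's definition, in which a reverse Carleson measure is first required to be a Carleson measure).

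For (i), fix a calibre $\delta>0$ and split $\int_{\mathbb{B}} |f|^q d\mu = \int_{G_\varepsilon} |f|^q d\mu + \int_{G_\varepsilon^c} |f|^q d\mu$ for a parameter $\varepsilon>0$ to be chosen. The key geometric observation is that, fixing one grid $\mathfrak{D}^{i_0}_\delta$ whose cells partition $\mathbb{B}$, any $Q\in H_\varepsilon$ gives $K_{c(Q)}\subseteq B_{c(Q)}\subseteq G_\varepsilon$, so $G_\varepsilon^c$ is covered by the cells with $Q\notin H_\varepsilon$. Using subharmonicity on each such cell (cf.\ the proof of Theorem \ref{t}) I obtain $\int_{K_{c(Q)}} |f|^q d\mu \lesssim \mu(B_{c(Q)})\langle |f|^q\rangle_{B_{c(Q)}} \leq \varepsilon V_\omega(B_{c(Q)})\langle |f|^q\rangle_{B_{c(Q)}} \simeq \varepsilon \int_{B_{c(Q)}} |f|^q dV_\omega$, the last $\simeq$ exploiting $\omega\in\mathcal{R}$. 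Summing with finite overlap yields $\int_{G_\varepsilon^c} |f|^q d\mu \lesssim \varepsilon \|f\|_{\omega,q}^q$; combined with reverse Carleson, $\int_{G_\varepsilon} |f|^q d\mu \geq (D-C\varepsilon)\|f\|_{\omega,q}^q$, which is positive for small $\varepsilon$. To transfer from $d\mu$ to $dV_\omega$ on $G_\varepsilon$, I invoke forward Carleson, $\mu(B_{c(Q)})\lesssim V_\omega(B_{c(Q)})$, and use finite overlap together with $\bigcup_{Q\in H_\varepsilon} B_{c(Q)}=G_\varepsilon$ to get $\int_{G_\varepsilon} |f|^q d\mu \lesssim \int_{G_\varepsilon} |f|^q dV_\omega$. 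Chaining these yields $\int_{G_\varepsilon} |f|^q dV_\omega \gtrsim \|f\|_{\omega,q}^q$, so $G_\varepsilon$ is dominated.

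For (ii), I test reverse Carleson against the normalized kernels $f_a=K_a^t/\|K_a^t\|_{\omega,q}$ with $K_a^t(z)=(1-\overline{a}z)^{-t}$ and $t$ large. By $(\ref{e26})$ and the regularity of $\omega$, $|f_a|^q\simeq 1/V_\omega(B(a,R))$ on $B(a,R)$, so $\int_{B(a,R)} |f_a|^q d\mu \simeq \mu(B(a,R))/V_\omega(B(a,R))$. For the tail I would apply the same cell/subharmonicity/finite-overlap machinery as in (i), combined with forward Carleson, to obtain $\int_{B(a,R)^c} |f_a|^q d\mu \lesssim \int_{B(a,R-C)^c} |f_a|^q dV_\omega$. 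Choosing $t$ so that the pullback of $|f_a|^q\,dV_\omega$ by the Möbius automorphism $\varphi_a$ is essentially independent of $a$ (for classical weights $\omega_\alpha=(1-|z|^2)^\alpha$ one can take $t=2(\alpha+n+1)/q$, giving exact invariance via Forelli--Rudin), this remaining integral tends to $0$ uniformly in $a$ as $R\to\infty$. Taking $R$ so large that the tail lies below $D/2$, the reverse Carleson lower bound forces $\mu(B(a,R))/V_\omega(B(a,R))\gtrsim 1$ uniformly in $a$. Finally, pick $\delta$ with $R_\delta\geq R$: regularity of $\omega$ gives $V_\omega(B(c(Q),R))\simeq V_\omega(B_{c(Q)})$, so $H_\varepsilon(\mu;\omega;\delta)=\mathfrak{D}_\delta$ for a suitable $\varepsilon>0$.

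The main obstacle is the uniform-in-$a$ tail estimate underpinning (ii). One must choose the exponent $t$ to exploit the near-Möbius invariance of $|f_a|^q\,dV_\omega$, so that $\int_{B(a,R)^c} |f_a|^q dV_\omega$ is independent of $a$. For classical weights this is a textbook Forelli--Rudin calculation; for general $\omega\in\mathcal{R}$ the exact invariance is lost, and one has to substitute a near-invariance statement using the doubling/regularity structure of $\omega$, which is the most delicate step in the argument.
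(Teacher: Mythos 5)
Your route to (i) is genuinely different from the paper's: the paper deduces (i) from Theorem \ref{t5}(iv) (itself proved by a contradiction/normal-families argument), whereas you argue directly by splitting $\int_{\mathbb{B}}|f|^q\,d\mu$ over $G_\varepsilon$ and its complement and absorbing the complement via $\mu(B_{c(Q)})\leq\varepsilon V_\omega(B_{c(Q)})$. That first half is correct and arguably cleaner. The gap is in your last step, the claim $\int_{G_\varepsilon}|f|^q\,d\mu\lesssim\int_{G_\varepsilon}|f|^q\,dV_\omega$. Subharmonicity only gives $|f(z)|^q\lesssim\langle|f|^q\rangle_{B(z,r)}$ over a \emph{full} Bergman ball, so for $z\in B_{c(Q)}$ you are forced onto the enlarged ball $B(c(Q),2R_\delta)$, and what you actually obtain is $\int_{G_\varepsilon}|f|^q\,d\mu\lesssim\int_{\widetilde G}|f|^q\,dV_\omega$ with $\widetilde G=\bigcup_{Q\in H_\varepsilon}B(c(Q),2R_\delta)\supsetneq G_\varepsilon$. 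There is no reason for $\widetilde G$ to be contained in $G_{\varepsilon'}(\mu;\omega;\delta)$ for any $\varepsilon'>0$: a cube adjacent to one in $H_\varepsilon$ need not carry any $\mu$-mass, so the enlargement cannot be re-absorbed into the family $H_{\varepsilon'}$. This is exactly the point the paper handles by introducing an auxiliary calibre $\delta_0<\delta$ with $2R_{\delta_0}\leq R_\delta$, running the estimate at calibre $\delta_0$, and checking that $H_{\varepsilon_0}(\mu;\omega;\delta_0)$-balls, even after doubling, land inside balls $B_{c(Q)}$ with $Q\in H_\varepsilon(\mu;\omega;\delta)$ (using doubling of $V_\omega$). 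Your argument is repairable by the same two-calibre device, but as written the conclusion you reach is that an enlargement of $G_\varepsilon$ is dominated, not $G_\varepsilon$ itself.

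For (ii) the paper gives no proof (it defers to Luecking's Theorem 4.3), and your sketch is essentially Luecking's argument: test the reverse inequality against normalized kernels and show the tail $\int_{\mathbb{B}\setminus B(a,R)}|f_a|^q\,dV_\omega$ is uniformly small for $R$ large. You correctly identify the uniform-in-$a$ tail bound for general $\omega\in\mathcal{R}$ as the delicate step, but you do not carry it out; it does go through by decomposing $\mathbb{B}\setminus B(a,R)$ into the annuli $\{2^j(1-|a|)\leq|1-\langle a,z\rangle|<2^{j+1}(1-|a|)\}$ and using the two-sided doubling estimates for $\widehat\omega$ (which give geometric decay in $j$ once the exponent $t$ is taken large enough, with (\ref{e26}) normalizing $\|f_a\|_{\omega,q}$), rather than exact M\"obius invariance. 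So (ii) is a plausible but incomplete sketch; the one concrete error to fix is the domain-enlargement issue in (i).
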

\begin{proof}
(i). Fix $\delta>0$. Let $\delta_0<\delta$ be such that $2R_{\delta_0}\leq R_{\delta}$.  By using Theorem \ref{t5} (iv), there is an $\varepsilon_0>0$ such that
$$\| f\|_{\omega,q}^q \lesssim  \sum_{Q\in H_{\varepsilon_0}(\mu;\omega;\delta_0)} V_\omega(B(c(Q),R_{\delta_0})). \langle |f|^{q-r}\rangle_{B(c(Q),R_{\delta_0})}. \langle |f|^r\rangle_{B(c(Q),R_{\delta_0})} \mathbf{1}_{H_{\varepsilon_0}(f;\mu;\delta_0;r)} (Q).$$
It is not difficult  to see that there is an $\varepsilon>0$ such that
$$H_{\varepsilon_0}(\mu;\omega;\delta_0)\subset H_{\varepsilon}(\mu;\omega;\delta).$$
Thus, from the above inclusion and (\ref{e12}), we deduce that
$$\| f\|_{\omega,q}^q \lesssim  \sum_{Q\in H_{\varepsilon}(\mu;\omega;\delta)} V_\omega(B(c(Q),R_{\delta})). \langle |f|^\frac{1}{\alpha}\rangle_{B(c(Q),R_{\delta})}^{\alpha(q-r)}. \langle |f|^\frac{1}{\alpha}\rangle_{B(c(Q),R_{\delta})}^{\alpha r}.$$
Now, from (\ref{e36}) and the fact that $B(c(Q),R_{\delta})=B_{c(Q)}\subseteq  G_\varepsilon(\mu;\omega;\delta)$, for every $Q\in H_\varepsilon(\mu;\omega;\delta)$, we obtain
\begin{align*}
\| f\|_{\omega,q}^q &\lesssim \sum_{Q\in H_\varepsilon(\mu;\omega;\delta)} V_\omega(B_{c(Q)}). \langle |f|^\frac{1}{\alpha}\rangle_{B_{c(Q)}}^{\alpha(q-r)}. \langle |f|^\frac{1}{\alpha}\rangle_{B_{c(Q)}}^{\alpha r}\\
 &=  \sum_{Q\in H_\varepsilon(\mu;\omega;\delta)} V_\omega(B_{c(Q)}). \langle |\mathbf{1}_{G_\varepsilon(\mu;\omega;\delta)} f|^\frac{1}{\alpha}\rangle_{B_{c(Q)}}^{\alpha q}\\
 &\lesssim \int_\mathbb{B} \mathcal{M}_{\omega, \mathfrak{D}_\delta} (|\mathbf{1}_{G_\varepsilon(\mu;\omega;\delta)} f|^{\frac{1}{\alpha}})^{\alpha q} dV_\omega\\
&\lesssim \int_\mathbb{B} |\mathbf{1}_{G_\varepsilon(\mu;\omega;\delta)} f_k|^q   dV_\omega=\int_{G_\varepsilon(\mu;\omega;\delta)} |f|^q dV_\omega.
\end{align*}
The last inequality follows from the boundedness of $\mathcal{M}_{\omega, \mathfrak{D}_\delta}:L_\omega^{\alpha q}\rightarrow L_\omega^{\alpha q}$.

(ii) follows by a similar argument as in the proof of \cite[Theorem 4.3]{luecking5}. Hence, we omitted its proof.
\end{proof}

The following theorem and Theorem \ref{t5} show that the gap between the property that a measure $\mu$ is a reverse $\omega$-Bergman Carleson measure and the property that the set $G_\varepsilon(\mu;\omega;\delta)$ is a dominated set for $A^q_\omega$ is that $\mathcal{M}_{\mu,\mathfrak{D}_\delta}$ or $\mathcal{M}_{\omega,\mathfrak{D}_\delta}$  from $A_\omega^q$ to $L_\mu^q$ is bounded below, respectively.

\begin{theorem}
Let $\omega\in \mathcal{R}$ and $0<q<\infty$. Let $\alpha\geq 1$ be such that $\alpha q>1$. If $\mu$ is $\omega$-Bergman Carleson measure,
then the following statements are equivalent.
\begin{itemize}
\item[(i)]  $\mathcal{M}_{\omega,\mathcal{T}}(|.|^\frac{1}{\alpha})^\alpha: A_\omega^q\rightarrow L_\mu^q$ is bounded below.
\item[(ii)]  For some (every) calibre $\delta$, there is an $\varepsilon>0$ such that $G_\varepsilon(\mu;\omega;\delta)$ is a dominated set for $A^q_\omega$.
\end{itemize}
\end{theorem}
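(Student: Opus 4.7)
The plan is to prove both implications directly, by combining the sparse bound of Theorem~\ref{t}, the pointwise estimate~(\ref{e36}), and the $L^p$--$L^q$ mapping properties of the tent-maximal operator from Corollary~\ref{c1}. As in the proofs of Theorems~\ref{t1} and~\ref{t5}, I may harmlessly assume $\omega=\nu\in\mathcal{R}$; regularity makes $V_\omega$ comparable on Bergman balls and, via Lemma~\ref{l2}, allows freely interchanging the subscripts $\mathcal{T}$ and $\mathfrak{D}_\delta$ inside the tent-maximal.

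For the easier direction (ii)$\Rightarrow$(i), I would assume $G_\varepsilon=G_\varepsilon(\mu;\omega;\delta)$ is $C$-dominated, cover $G_\varepsilon=\bigcup_{Q\in H_\varepsilon}B_{c(Q)}$, and estimate each ball integral via subharmonicity of $|f|^{1/\alpha}$, obtaining $\int_{B_{c(Q)}}|f|^q\,dV_\omega\lesssim V_\omega(B_{c(Q)})\,\langle|f|^{1/\alpha}\rangle_{B(c(Q),2R_\delta)}^{\alpha q}$. The defining inequality of $Q\in H_\varepsilon$ lets me replace $V_\omega(B_{c(Q)})$ by $\varepsilon^{-1}\mu(B_{c(Q)})$, and (\ref{e36}) upgrades the ball-average to $\mathcal{M}_{\omega,\mathcal{T}}(|f|^{1/\alpha})(z)$ for any $z\in B_{c(Q)}$. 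Summing with the finite-overlap property of $\{B_{c(Q)}\}$ from Lemma~\ref{l7}(b) and invoking the domination hypothesis yields $\|f\|_{\omega,q}^q\lesssim\varepsilon^{-1}\|\mathcal{M}_{\omega,\mathcal{T}}(|f|^{1/\alpha})^{\alpha}\|_{L^q_\mu}^q$, which is (i).

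For (i)$\Rightarrow$(ii), the strategy is to decompose $\mu=\mathbf{1}_{G_\varepsilon}\mu+\mathbf{1}_{G_\varepsilon^c}\mu$. The key observation is that $\mathbf{1}_{G_\varepsilon^c}\mu$ carries an $\omega$-Carleson constant $\lesssim\varepsilon$: if $Q\in H_\varepsilon$, the containment $B_{c(Q)}\subset G_\varepsilon$ forces $(\mathbf{1}_{G_\varepsilon^c}\mu)(B_{c(Q)})=0$, while if $Q\notin H_\varepsilon$, the very definition of $H_\varepsilon$ gives $\mu(B_{c(Q)})\leq\varepsilon V_\omega(B_{c(Q)})$. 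Applying Corollary~\ref{c1} to $\mathbf{1}_{G_\varepsilon^c}\mu$ therefore delivers $\int_{G_\varepsilon^c}\mathcal{M}_{\omega,\mathcal{T}}(|f|^{1/\alpha})^{\alpha q}\,d\mu\lesssim\varepsilon\|f\|_{\omega,q}^q$. Feeding this into (i) and choosing $\varepsilon$ small enough to absorb the error into the left side produces $\|f\|_{\omega,q}^q\lesssim\int_{G_\varepsilon}\mathcal{M}_{\omega,\mathcal{T}}(|f|^{1/\alpha})^{\alpha q}\,d\mu$.

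The main obstacle, and the last step, is converting $\int_{G_\varepsilon}\mathcal{M}_{\omega,\mathcal{T}}(|f|^{1/\alpha})^{\alpha q}\,d\mu$ into $\int_{G_\varepsilon}|f|^q\,dV_\omega$. I plan to mirror the closing computation in the proof of Theorem~\ref{t3}(i): after expanding the tent-maximal through the sparse averages supplied by Theorem~\ref{t} (at $k=0$) together with Lemma~\ref{l2}, the restriction of the integration to $G_\varepsilon$ forces the surviving averages to live on balls $B_{c(Q)}$ with $Q\in H_\varepsilon$; on such balls the inclusion $B_{c(Q)}\subset G_\varepsilon$ makes $\langle|f|^{1/\alpha}\rangle_{B_{c(Q)}}=\langle|f\mathbf{1}_{G_\varepsilon}|^{1/\alpha}\rangle_{B_{c(Q)}}$, and then Corollary~\ref{c1} applied with $\mu$ replaced by $V_\omega$ (trivially $\omega$-Carleson) bounds the resulting sum by $\int_{\mathbb{B}}|f\mathbf{1}_{G_\varepsilon}|^q\,dV_\omega=\int_{G_\varepsilon}|f|^q\,dV_\omega$, closing the loop.
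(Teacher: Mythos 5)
Your proof of (ii)$\Rightarrow$(i) is essentially the paper's own argument: cover $G_\varepsilon(\mu;\omega;\delta)$ by the balls $B_{c(Q)}$ with $Q\in H_\varepsilon(\mu;\omega;\delta)$, trade $V_\omega(B_{c(Q)})$ for $\varepsilon^{-1}\mu(B_{c(Q)})$, upgrade the local average to $\mathcal{M}_{\omega,\mathcal{T}}(|f|^{1/\alpha})^{\alpha q}$ via (\ref{e36}), and sum using finite overlap. That direction is fine.

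The direction (i)$\Rightarrow$(ii) is where your proposal breaks. The opening move is actually sound and somewhat slicker than the paper's: $\mathbf{1}_{G_\varepsilon^c}\mu$ does satisfy a ball-wise Carleson bound with constant $\varepsilon$ (zero on balls with $Q\in H_\varepsilon$, at most $\varepsilon V_\omega(B_{c(Q)})$ otherwise), this transfers to the tent-wise condition by summing over descendants with finite overlap, and Corollary \ref{c1} plus absorption then reduces matters to $\|f\|_{\omega,q}^q\lesssim\int_{G_\varepsilon}\mathcal{M}_{\omega,\mathcal{T}}(|f|^{1/\alpha})^{\alpha q}\,d\mu$. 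But the closing step --- converting this into $\int_{G_\varepsilon}|f|^q\,dV_\omega$ --- does not work as described. The quantity $\mathcal{M}_{\omega,\mathcal{T}}(|f|^{1/\alpha})(z)$ for $z\in G_\varepsilon$ is a supremum of averages over \emph{all} tents $\widehat{K_{c(Q)}}$ containing $z$, and those tents are in general much larger than, and not contained in, $G_\varepsilon$; so restricting the \emph{integration} to $G_\varepsilon$ does not restrict the \emph{averages} to $G_\varepsilon$, and you cannot replace $f$ by $f\mathbf{1}_{G_\varepsilon}$ inside the maximal function. Nor can you bound the maximal function from above by a single local average $\langle|f|^{1/\alpha}\rangle_{B_{c(Q)}}$: inequality (\ref{e36}) goes in the opposite direction. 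Concretely, if $f$ is concentrated near $\mathbb{S}$ and $G_\varepsilon$ sits well inside $\mathbb{B}$, then $\int_{G_\varepsilon}\mathcal{M}_{\omega,\mathcal{T}}(|f|^{1/\alpha})^{\alpha q}\,d\mu$ stays bounded below by a multiple of $\mu(G_\varepsilon)\|f\|$-type quantities while $\int_{G_\varepsilon}|f|^q\,dV_\omega$ can be made tiny; the inequality you want is, modulo the already-known boundedness from Corollary \ref{c1}, equivalent to statement (ii) itself, so the step is circular. This is precisely why the paper proves (i)$\Rightarrow$(ii) by the contradiction scheme of Theorem \ref{t5} [(i)$\Rightarrow$(iv)]: one takes normalized $f_k$ violating domination for $\varepsilon=1/k$, expands $\|f_k\|_{q,\mu}^q$ (not a maximal function) through the sparse bound (\ref{e12}) into purely local averages indexed by dyadic cubes, and kills the cubes outside $H_{1/k}$ and outside $H_{1/k}(f_k;\mu;\delta;r)$ using the explicit factor $1/k$. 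You would need to adopt that local, cube-by-cube decomposition (or find a genuinely new way to localize the maximal function) to complete your direct argument.
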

\begin{proof}
(i)$\Rightarrow$(ii) is deduced by a similar argument as in the proof of Theorem \ref{t5} [(i)$\Rightarrow$(iv)].

(ii)$\Rightarrow$(i). Using Theorem \ref{t},
\begin{align*}
\| f\|_{\omega,q}^q &\lesssim \int_{G_\varepsilon(\mu;\omega;\delta)} |f|^q dV_\omega= \sum_{c(Q)\in H_\varepsilon(\mu;\omega;\delta)}  V_\omega(B_{c(Q)}). \langle |f|^{q}\rangle_{B_{c(Q)}} \\
&\lesssim \sum_{c(Q)\in H_\varepsilon(\mu;\omega;\delta)} \mu(B_{c(Q)}). \langle |f|^{q}\rangle_{B_{c(Q)}}\leq \int_\mathbb{B} \mathcal{M}_{\omega, \mathfrak{D}_\delta} (|f|^{\frac{1}{\alpha}})^{\alpha q} d\mu(z) \\
&= \|\mathcal{M}_{\omega, \mathfrak{D}_\delta} (|f|^{\frac{1}{\alpha}})^{\alpha}\|_{q,\mu}^q.
\end{align*}
This is the desired result.
\end{proof}

\section{Appendixes}

\subsection{Proof of Lemma \ref{l5}}
It suffices to prove that $\mathcal{M}_{t,\mu,\mathcal{T}}:L^p_\mu\rightarrow L^{q,\infty}_\mu$ is bounded. Then, the result follows from the interpolating theorem.

Let $f\in L^p_\mu$ and $s>0$. Define
$$O_s=\{ z\in \mathbb{B}; \ \mathcal{M}_{t,\mu,\mathcal{T}}(|f|)>s\}.$$
Let $\Gamma$ be the set of dyadic cubes $Q$ such that
$$ \mu (\widehat{K_{c(Q)}})^t \langle |f|\rangle_{\mu, \widehat{K_{c(Q)}}}>s,$$
and let $\Gamma^{\max}$ be the subfamily of $\Gamma$ consisting of cubes $Q$ such that $\widehat{K_{c(Q)}}$ be a maximal tent.
Clearly, $\bigcup_{Q\in \Gamma^{\max}} \widehat{K_{c(Q)}}$ is a covering of $O_s$ and each $z\in O_s$ is at most in two tents $\widehat{K_{c(Q)}}$, where $Q\in\Gamma^{\max}$. Thus,
\begin{align} \label{e18}
s^q \mu (O_s)&\leq \sum_{Q \in \Gamma^{\max}} \mu(\widehat{K_{c(Q)}}) s^q \notag\\
&\leq  \sum_{Q \in \Gamma^{\max}} \mu(\widehat{K_{c(Q)}})  \Big(\mu (\widehat{K_{c(Q)}})^t \langle |f|\rangle_{\mu, \widehat{K_{c(Q)}}}\Big)^q\notag\\
&= \sum_{Q \in \Gamma^{\max}} \mu(\widehat{K_{c(Q)}})^{1+sq-q} \Big( \int_{\widehat{K_{c(Q)}}} |f|d\mu \Big)^q.
\end{align}
By H\"{o}lder's inequality,
\begin{align*}
\ref{e18}&\leq \sum_{Q \in \Gamma^{\max}} \mu(\widehat{K_{c(Q)}})^{1+sq-q+\frac{q}{p^\prime}} \Big( \int_{\widehat{K_{c(Q)}}} |f|^p d\mu \Big)^\frac{q}{p}\\
&= \sum_{Q \in \Gamma^{\max}} \Big( \int_{\widehat{K_{c(Q)}}} |f|^p d\mu \Big)^\frac{q}{p} \leq \Big( \sum_{Q \in \Gamma^{\max}} \int_{\widehat{K_{c(Q)}}} |f|^p d\mu \Big)^\frac{q}{p}\\
&\leq \Big( 2 \int_{\mathbb{B}} |f|^p d\mu \Big)^\frac{q}{p}
\end{align*}
The proof is complete.

\subsection{Generalizing the paper to other domains}
It can be observed that the results of this paper can be extended to the weighted Bergman spaces defined on upper half-plane and strictly pseudo-convex bounded domains.
\begin{itemize}
\item[(i)] Let $\mathbb{R}^{2+}$ be the upper half-plane. Then, the weighted Bergman space $A^q_\alpha(\mathbb{R}^{2+})$ is defined by the weight $\omega_\alpha(z)= \frac{1}{\pi} (\alpha +1) (2 Im \ z)^\alpha$ on $\mathbb{R}^{2+}$. For the details and needed preliminaries obtained on these spaces see \cite{wick2}.

\item[(ii)] Let $\Omega$ be a strictly pseudo-convex bounded domain in $\mathbb{C}^n$ and let $\delta(z)$ be the distance of $z$ from the boundary of $\Omega$.  Then, the weighted Bergman space $A^q_\alpha(\Omega)$ is defined by the weight $\omega_\alpha(z)= \delta(z)^\alpha$. All the needed basic results for the stated extension may be found in \cite{hu}.
\end{itemize}

\section*{Acknowledgments}
The idea of working on this project came to the author from Dr. Mahdi Hormozi in Jan. or Feb. 2020.
The author would like to thank him for his
valuable comments and advice which certainly improved the paper.

\vspace*{0.5cm}

 Hamzeh Keshavarzi

E-mail: Hamzehkeshavarzi67@gmail.com

Department of Mathematics, College of Sciences,
Shiraz University, Shiraz, Iran

\end{document}